\DeclareFontFamily{OT1}{rsfs}{}
\DeclareFontShape{OT1}{rsfs}{n}{it}{<-> rsfs10}{}
\DeclareMathAlphabet{\mathscr}{OT1}{rsfs}{n}{it}
\newtheorem{theorem}{Theorem}[section]
\newtheorem{lemma}[theorem]{Lemma}
\newtheorem{corol}[theorem]{Corollary}
\newtheorem{prop}[theorem]{Proposition}
\theoremstyle{definition} \newtheorem{defin}[theorem]{Definition}}
\theoremstyle{remark} \newtheorem{remark}[theorem]{Remark}
\newtheorem{example}[theorem]{Example}
\newtheorem{warning}[theorem]{Warning}}
\newcommand{\Abb}{{\mathbb{A}}}
\newcommand{\Cbb}{{\mathbb{C}}}
\newcommand{\Qbb}{{\mathbb{Q}}}
\newcommand{\Lbb}{{\mathbb{L}}}
\newcommand{\Pbb}{{\mathbb{P}}}
\newcommand{\Zbb}{{\mathbb{Z}}}
\newcommand{\Tbb}{{\mathbb{T}}}
\newcommand{\cF}{{\mathscr F}}
\newcommand{\cL}{{\mathscr L}}
\newcommand{\cM}{{\mathscr M}}
\newcommand{\cN}{{\mathscr N}}
\newcommand{\cO}{{\mathscr O}}
\newcommand{\Til}[1]{{\widetilde{#1}}}
\newcommand{\one}{1\hskip-3.5pt1}
\newcommand{\bbchi}{\Psi}
\newcommand{\cfu}{{c_{\text{F}}}}
\newcommand{\csm}{{c_{\text{SM}}}}
\newcommand{\cma}{{c_{\text{Ma}}}}
\newcommand{\cwm}{{c_{\text{wMa}}}}
\newcommand{\cchi}{{\psi}}
\newcommand{\Var}{{\rm Var}}
\DeclareMathOperator{\codim}{codim}
\title{
Verdier specialization via weak factorization
}
\author{Paolo Aluffi}
\address{
Mathematics Department, 
Florida State University,
Tallahassee FL 32306, U.S.A.
}
\email{aluffi@math.fsu.edu}
\begin{document}

\begin{abstract}
Let $X\subset V$ be a closed embedding, with $V\smallsetminus X$ 
nonsingular. We define a constructible function $\cchi_{X,V}$ on $X$, 
agreeing with Verdier's specialization of the constant function $\one_V$ 
when $X$ is the zero-locus of a function on $V$. Our definition is given 
in terms of an embedded resolution of $X$; 
the independence on the choice of resolution is obtained as a consequence 
of the weak factorization theorem of \cite{MR1896232}. The main property of 
$\cchi_{X,V}$ is a compatibility with the specialization of the Chern class 
of the complement
$V\smallsetminus X$. With the definition adopted here, this is an easy 
consequence of standard intersection theory. It recovers Verdier's 
result when $X$ is the zero-locus of a function on $V$.

Our definition has a straightforward counterpart $\bbchi_{X,V}$ in a motivic 
group. The function~$\cchi_{X,V}$ and the corresponding Chern class 
$\csm(\cchi_{X,V})$ and motivic aspect $\bbchi_{X,V}$ all have natural
`monodromy' decompositions, for for any $X\subset V$ as above.

The definition also yields an expression for Kai Behrend's constructible function 
when applied to (the singularity subscheme of) the zero-locus of a function 
on $V$.
\end{abstract}

\maketitle


\section{Introduction}\label{intro}

\subsection{}
Consider a family $\pi: V\to D$ over the open disk, satisfying a
suitable condition of local triviality over $D\smallsetminus
\{0\}$. In~\cite{MR629126}, J.-L.~Verdier defines a `specialization
morphism' for constructible functions, producing a function
$\sigma_*(\varphi)$ on the central fiber~$X$ of the family for every
constructible function $\varphi$ on $V$. The key property of this
specialization morphism is that it commutes with the construction of
Chern classes of constructible functions in the sense of MacPherson
(\cite{MR50:13587}); cf.~Theorem~5.1 in Verdier's note. The
specialization morphism for constructible functions is induced from a
morphism at the level of constructible sheaves $\cF$, by taking
alternating sums of ranks for the corresponding complex of nearby
cycles $R\Psi_\pi \cF$.

The main purpose of this note is to give a more direct description of
the specialization morphism (in the algebraic category, over
algebraically closed fields of characteristic~$0$), purely in terms of
constructible functions and of resolution of singularities, including
an elementary proof of the basic compatibility relation with Chern
classes.  We will assume that $V$ is nonsingular away from $X$, and
focus on the case of the specialization of constant functions; by
linearity and functoriality properties, this suffices in order to
determine $\sigma_*$ in the situation considered by Verdier. On the
other hand, the situation we consider is more general than the
specialization template recalled above: we define a constructible
function $\cchi_{X,V}$ for {\em every\/} proper closed subscheme $X$
of a variety $V$ (such that $V\smallsetminus X$ is nonsingular), which
agrees with Verdier's specialization of the constant function $\one_V$
when $X$ is the fiber of a morphism from $V$ to a nonsingular curve.

The definition of $\cchi_{X,V}$ (Definition~\ref{maindef}) is
straightforward, and can be summarized as follows.  Let $w: W\to V$ be
a proper birational morphism such that $W$ is nonsingular, and
$D=w^{-1}(X)$ is a divisor with normal crossings and nonsingular
components, and for which $w|_{W\smallsetminus D}$ is an
isomorphism. Then define $\cchi_{D,W}(p)$ to be $m$ if $p$ is on a
single component of $D$ of multiplicity~$m$, and $0$ otherwise; and
let $\cchi_{X,V}$ be the push-forward of $\cchi_{D,W}$ to $X$.

Readers who are familiar with Verdier's paper \cite{MR629126} should
recognize that this construction is implicit in \S5 of that paper, if
$X$ is the zero-locus of a function on $V$. Our contribution is
limited to the realization that the weak factorization theorem of
\cite{MR1896232} may be used to adopt this prescription as a {\em
definition,\/} that the properties of this function follow directly
from the standard apparatus of intersection theory, and that this
approach extends the theory beyond the specialization situation
considered by Verdier (at least in the algebraic case).  Denoting by
$\csm(-)$ the Chern-Schwartz-MacPherson class of a constructible
function, we prove the following:\smallskip

\noindent{\bf Theorem I.}
Let $i: X\hookrightarrow V$ be an effective Cartier divisor. Then
\[
\csm(\cchi_{X,V})=i^* \csm(\one_{V\smallsetminus X})\quad.
\]

An expression for
$i^*\csm(\one_{V\smallsetminus X})$ in terms of the basic ingredients
needed to define $\cchi_{X,V}$ as above may be given as soon as
$i:X\to V$ is a regular embedding (cf.~Remark~\ref{regemb}). In fact,
with suitable positions, Theorem~I holds for arbitrary closed embeddings
$X\subset V$ (Theorem~\ref{spethm}).

Theorem~I reproduces Verdier's result when $X$ is a fiber of a
morphism from $V$ to a nonsingular curve; in that case (but not in
general) $\one_{V\smallsetminus X}$ may be replaced with $\one_V$, as
in Verdier's note. The definition of $\cchi_{X,V}$ is clearly
compatible with smooth maps, and in particular the value of
$\cchi_{X,V}$ at a point $p$ may be computed after restricting to an
open neighborhood of $p$. Thus, Verdier's formula for the
specialization function in terms of the Euler characteristic of the
intersection of a nearby fiber with a ball  (\S4 in \cite{MR629126}) may 
be used to compute $\cchi_{X,V}$ if $X$ is a divisor in $V$,
over $\Cbb$.
\smallskip

From the definition it is clear that the function $\cchi_{X,V}$ is
birationally invariant in the following weak sense:
\smallskip

\noindent{\bf Theorem II.}  
Let $\pi: V' \to V$ be a proper birational morphism; let
$X'=\pi^{-1}(X)$, and assume that $\pi$ restricts to an isomorphism
$V'\smallsetminus X' \to V\smallsetminus X$. Then
\[
\pi_* (\cchi_{X',V'})=\cchi_{X,V}\quad.
\]

\noindent (Here, $\pi_*$ is the push-forward of constructible functions.)

In fact, the whole specialization {\em morphism\/} commutes with
arbitrary proper maps, at least in Verdier's specialization situation
(\cite{MR629126}, Corollary~3.6). It would be desirable to establish
this fact for the morphism induced by $\cchi_{X,V}$ for arbitrary $X$,
by the methods used in this paper.\smallskip

The definition summarized above yields a natural decomposition of the
constructible function $\cchi_{V,X}$ (and hence of its Chern class
$i^*\csm(\one_{V\smallsetminus X})$) according to the multiplicities
of some of the exceptional divisors, see Remarks~\ref{distdecfun}
and~\ref{distdeccla}.  In the specialization situation, this
decomposition matches the one induced on the Milnor fiber by
monodromy, as follows from the description of the latter in
\cite{MR0371889}. As Sch\"urmann pointed out to me, an
analogous description in the more general case considered here may be
found in \cite{VeysProeyen}, Theorem~3.2.

\subsection{}
In the basic specialization situation, in which $X$ is the zero-scheme
of a function $f$ on~$V$ and $V$ is nonsingular, let $Y$ be the {\em
singularity subscheme\/} of $X$ (i.e., the `critical scheme'
of~$f$). One can define a constructible function $\mu$ on $X$ by
\begin{equation*}
\tag{*}
\mu=(-1)^{\dim V}(\one_X-\cchi_{X,V})\quad.
\end{equation*}
In this case (and over $\Cbb$), the function $\cchi_{X,V}$ agrees
with Verdier's specialization function~$\chi$ (here we use notation as
in \cite{MR2002g:14005}, cf.~especially Proposition~5.1). The function
$\mu$ is $0$ outside of $Y$, so may be viewed as a constructible
function on~$Y$. In fact, it has been observed
(cf.~e.g.,~\cite{MR1819626, MR2600874}) that the function $\mu$ is a
specific linear combination of local Euler obstructions, and in particular it
is determined by the scheme $Y$ and can be generalized to arbitrary
schemes. Kai Behrend denotes this generalization $\nu_Y$ in
\cite{MR2600874}. The definition of $\cchi_{X,V}$ given in this paper
yields an alternative computation of $\mu$ when~$Y$~is
the critical scheme of a function, and Theorem~II describes the
behavior through modifications along $X$ of this function: if $\pi: V' \to V$ 
is as in Theorem~II, then
\begin{equation*}
\tag{**}
\nu_Y=\pi_*(\nu_{Y'}) +(-1)^{\dim V}(\one_Y-\pi_*(\one_{Y'}))
\end{equation*}
provided that $V$ and $V'$ are nonsingular, $X$ and hence
$X'=\pi^{-1}(X)$ are hypersurfaces, and $Y$, $Y'$ are their
singularity subschemes. As $\cchi_{X,V}$ is defined for arbitrary
$X\subset V$, there may be a generalization of (*) linking Behrend's
function and $\chi_{X,V}$ when $X$ is not necessarily a hypersurface;
it would be interesting to have statements analogous to (**), holding
for more general $X$.

In \S\ref{wmather} we comment on the relation between
$\csm(\cchi_{X,V})$ and the `weighted Chern-Mather class' of the
singularity subscheme $Y$ of a hypersurface $X$; the degree of this
class is a Donaldson-Thomas type invariant (\cite{MR2600874}, \S4.3).
We also provide an explicit formula for the function $\mu$ in terms of
a resolution of the hypersurface $X$. It would be interesting to
extend these results to the non-hypersurface case.
\smallskip

In a different vein, J.~Sch\"urmann has considered the iteration of
the specialization operator over a set of generators for a complete
intersection $X\subset V$ (\cite{Schu}, Definition~3.6). It would be a
natural project to compare Sch\"urmann's definition (which depends on
the order of the generators) with our definition of $\cchi_{X,V}$
(which is independent of the order, and may be extended to arbitrary
$X\subset V$). Sch\"urmann also points out that the {\em deformation
to the normal cone\/} may be used to reduce an arbitrary $X\subset V$
to a specialization situation; this strategy was introduced in
\cite{MR737938}, and is explained in detail in~\cite{schup}, \S1.
Again, it would be interesting to establish the precise relation
between the resulting specialization morphism and the function
$\cchi_{X,V}$ studied here.\smallskip

\subsection{}
We include in \S\ref{mot} a brief discussion of a `motivic' invariant
$\bbchi_{X,V}$, also defined for any closed embedding $X\subset V$
into a variety, still assumed for simplicity to be nonsingular outside
of~$X$. This invariant can be defined in the quotient of the
Grothendieck ring of varieties by the ideal generated by the class of
a torus $\Tbb=\Abb^1-\Abb^0$, or in a more refined relative ring over
$X$.  The definition is again extremely simple, when given in terms of
a resolution in which the inverse image of $X$ is a divisor with
normal crossings; the proof that the invariant is well-defined also
follows from the weak factorization theorem.  As its constructible
function counterpart, the class $\bbchi_{X,V}$ admits a natural
`monodromy' decomposition (although a Milnor fiber is not defined in
general in the situation we consider), see Remark~\ref{distdecmot}.
When $V$ is nonsingular and $X$ is the zero-locus of a function
on~$V$, $\bbchi_{X,V}$ is a poor man's version of the Denef-Loeser
motivic Milnor/nearby fiber (\cite{MR1905328}, 3.5); it is defined
in a much coarser ring, but it carries information concerning the
topological Euler characteristic and some other Hodge-type data. We
note that $\bbchi_{X,V}$ is {\em not\/} the image of the limit of the
{\em naive\/} motivic zeta function of Denef-Loeser, since it does
carry multiplicity information, while $Z^{\text{naive}}(T)$ discards
it (see for example \cite{MR1905328}, Corollary~3.3). The limit of
the (non-naive) Denef-Loeser motivic zeta function $Z(T)$ encodes the
multiplicity and much more as actual monodromy information, and in
this sense it lifts the information carried by our~$\bbchi_{X,V}$. It
would be interesting to define and study an analogous lift for more
general closed embeddings $X\subset V$, and possibly allowing $V$ to
be singular along~$X$.

The approach of \cite{math.AG/0404512} could be used to unify the
constructions of $\cchi_{X,V}$ and $\bbchi_{X,V}$ given in this paper,
and likely extend them to other environments, but we will not pursue
such generalizations here since our aim is to keep the discussion at
the simplest possible level.  Likewise, `celestial' incarnations of
the Milnor fiber (in the spirit of \cite{MR2183846}, \cite{MR2280127})
will be discussed elsewhere.

\subsection{}
In our view, the main advantages of the approach taken in this paper
are the simplicity afforded by the use of the weak factorization
theorem and the fact that the results have a straightforward
interpretation for any closed embedding $X\subset V$, whether arising
from a specialization situation or not.  These results hold with
identical proofs over any algebraically closed field of characteristic
zero. We note that the paper \cite{VeysProeyen} of van Proeyen and
Veys also deals with arbitrary closed embeddings with nonsingular
complements, as in this note.  A treatment of Verdier specialization
over arbitrary algebraically closed fields of characteristic zero,
also using only the standard apparatus of intersection theory, was
given by Kennedy in \cite{MR1060893} by relying on the Lagrangian
viewpoint introduced by C.~Sabbah \cite{MR804052}.  Fu
(\cite{MR1129364}) gives a description of Verdier's specialization in
terms of normal currents.

We were motivated to take a new look at Verdier's specialization
because of applications to string-theoretic identities
(cf.~\cite{MR2495684}, \S4). Also, Verdier specialization offers an
alternative approach to the main result of \cite{delconCSM}.  The main
reason to allow $V$ to have singularities along $X$ is that this
typically is the case for specializations arising from pencils of
hypersurfaces in a linear system, as in these
applications. See~\S\ref{pencils} for a few simpler examples
illustrating this point.

\subsection{}
I am indebted to M.~Marcolli for helpful conversations, and I thank 
J. Sch\"urmann and W.~Veys for comments on a previous version of this paper.


\section{The definition}\label{chidef}

Our schemes are separated, of finite type over an algebraically closed
field~$k$ of characteristic~$0$. The characteristic restriction is due
to the use of resolution of singularities and the main result of
\cite{MR1896232}, as well as the theory of Chern-Schwartz-MacPherson
classes. (Cf.~\cite{MR1063344} and~\cite{MR2282409} for discussions of
the theory in this generality.)

\subsection{Definition of $\cchi_{X,V}$}
\begin{defin}\label{maindef}
Let $V$ be a variety, and let $i: X\hookrightarrow V$ be a closed
embedding.  We assume that $V\smallsetminus X$ is nonsingular and not
empty. The constructible function $\cchi_{X,V}$ on $X$ is defined as 
follows.
\begin{itemize}
\item Let $w: W\to V$ be a proper birational morphism; let 
$D=w^{-1}(X)$, and $d=w|_D$:
\[
\xymatrix{
D \ar@{^(->}[r]^j \ar[d]_d & W \ar[d]^w \\
X \ar@{^(->}[r]^i & V
}
\]
We assume that $W$ is nonsingular, $D$ is a divisor with normal
crossings and nonsingular components $D_\ell$ in $W$, and $w$
restricts to an isomorphism $W\smallsetminus D \to V\smallsetminus
X$. (Such a $w$ exists, by resolution of singularities.)  Let $m_\ell$
be the multiplicity of $D_\ell$ in~$D$.
\item We define a constructible function $\cchi_{D,W}$ on $D$ by
letting $\cchi_{D,W}(p)=m_\ell$ for $p\in D_\ell$, $p\not\in D_k$
($k\ne \ell$), and $\cchi_{D,W}(p)=0$ for $p\in D_\ell\cap D_k$, any
$k\ne \ell$.
\item Then let $\cchi_{X,V}:=d_*(\cchi_{D,W})$.
\end{itemize}
\end{defin}

We remind the reader that the push-forward of constructible functions
is defined as follows. For any scheme $S$, denote by $\one_S$ the
function with value $1$ along $S$, and $0$ outside of $S$. If $S$ is a
subvariety of $D$, and $x\in X$, $d_*(\one_S)(x)$ equals
$\chi(d^{-1}(x)\cap S)$.  By linearity, this prescription defines
$d_*(\varphi)$ for every constructible function $\varphi$ on
$D$. Here, $\chi$ denotes the topological Euler characteristic for
$k=\Cbb$; see \cite{MR1060893} or \cite{MR2282409} for the extension
to algebraically closed fields of characteristic~$0$.

Of course we have to verify that the definition of $\cchi_{X,V}$ given
in Definition~\ref{maindef} does not depend on the choice of $w: W\to
V$.

\begin{lemma}\label{spepro}
With notation as above, the function $\cchi_{X,V}$ is independent of
the choice of $w: W\to V$.
\end{lemma}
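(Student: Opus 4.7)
The plan is to invoke the weak factorization theorem of \cite{MR1896232} to reduce the independence statement to the case of a single blow-up, and then to verify the invariance under a single blow-up by a direct local computation with Euler characteristics.

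First I would reduce to the case in which the two resolutions are linked by a morphism over $V$. Given $w_1\colon W_1 \to V$ and $w_2\colon W_2 \to V$ as in Definition~\ref{maindef}, resolution of singularities applied to the closure of the graph of the birational map $W_1 \dashrightarrow W_2$ inside $W_1 \times_V W_2$ produces a third resolution $w_3\colon W_3 \to V$ of the same type which dominates both; by transitivity it suffices to compare two resolutions $w\colon W \to V$ and $w'\colon W' \to V$ linked by a morphism $\pi\colon W' \to W$ over $V$. To such a $\pi$ I would apply the strong form of the weak factorization theorem, factoring it as
\[
W' = W'_0 \dashrightarrow W'_1 \dashrightarrow \cdots \dashrightarrow W'_N = W,
\]
each step a blow-up or blow-down along a smooth irreducible center disjoint from $V\smallsetminus X$ and having normal crossings with the (at each stage snc) inverse image of $X$. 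By induction on $N$, matters reduce to verifying the equality $\sigma_*(\cchi_{\widetilde D,\widetilde W}) = \cchi_{D,W}$ for a single blow-up $\sigma\colon \widetilde W \to W$ along a smooth irreducible center $Z \subset D$ meeting $D$ with normal crossings; pushing this equality further down to $X$ by $d$ gives the independence.

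If $p \notin Z$, then $\sigma$ is an isomorphism near $p$ and the equality is immediate. Assume $p \in Z$, write $c = \codim_W Z$, and in local snc coordinates at $p$ list the components of $D$ through $p$ as $D_1,\dots,D_s$ with multiplicities $m_1,\dots,m_s$, relabeled so that $D_1,\dots,D_k$ are those containing $Z$ (necessarily $1 \leq k \leq c$, since $Z$ is irreducible and $Z \subset D$ forces $Z \subset D_\ell$ for some $\ell$) and $D_{k+1},\dots,D_s$ are those meeting $Z$ transversally. Then the fiber $\sigma^{-1}(p)$ is $\Pbb^{c-1} = \Pbb(N_{Z/W}|_p)$; the strict transforms $\widetilde D_i$ for $i \leq k$ cut on it the hyperplanes $H_i = \Pbb(N_{Z/D_i}|_p)$, which are in general position by the snc hypothesis; the strict transforms $\widetilde D_j$ for $j > k$ contain $\sigma^{-1}(p)$ entirely (as $D_j$ does not contain $Z$, $\sigma^{-1}(D_j) = \widetilde D_j$, and $p \in D_j$); and the multiplicity of the exceptional divisor $E$ in $\widetilde D$ is $m_E = m_1 + \cdots + m_k$.

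If some $D_j$ with $j > k$ passes through $p$, every point of $\sigma^{-1}(p)$ lies on both $E$ and $\widetilde D_j$, so $\cchi_{\widetilde D,\widetilde W}$ vanishes on the entire fiber, giving $\sigma_*(\cchi_{\widetilde D,\widetilde W})(p) = 0 = \cchi_{D,W}(p)$. Otherwise $\cchi_{\widetilde D,\widetilde W}$ equals $m_E$ on $U = \Pbb^{c-1} \smallsetminus (H_1 \cup \cdots \cup H_k)$ and vanishes on the rest of the fiber, so
\[
\sigma_*(\cchi_{\widetilde D,\widetilde W})(p) = m_E \cdot \chi(U).
\]
A short inclusion-exclusion using $\chi(\Pbb^r) = r+1$ yields $\chi(U) = 1$ when $k = 1$ and $\chi(U) = 0$ when $k \geq 2$; this matches $\cchi_{D,W}(p) = m_1$ in the first case (no other component through $p$) and $\cchi_{D,W}(p) = 0$ in the second. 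The main obstacle is arranging the reduction so that the weak factorization theorem can be invoked in its snc-preserving form relative to $V$, and carefully handling the local case analysis; the Euler-characteristic identity itself is a one-line computation.
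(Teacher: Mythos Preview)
Your proposal is correct and follows essentially the same approach as the paper: reduce via weak factorization to a single blow-up along a smooth center $Z\subseteq D$ meeting $D$ with normal crossings, then do the local case analysis on the fiber $\Pbb^{c-1}$ according to which components through $p$ contain $Z$. The only cosmetic differences are that you insert an explicit common-refinement step before invoking weak factorization, and you compute $\chi(U)$ by inclusion--exclusion rather than via the identification $U\cong \Abb^{r+1-e}\times \Tbb^{e-1}$ used in the paper's Lemma~\ref{elemlem}.
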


\begin{proof}
The weak factorization theorem of 
\cite{MR1896232} reduces the verification to the following fact.
\begin{itemize}
\item Let $W$, $D$, $\cchi_{D,W}$ as above;
\item Let $\pi: \Til W\to W$ be the blow-up of $W$ along a center
$Z\subseteq D$ that meets $D$ with normal crossings, and let $\widehat
D=\pi^{-1}(D)$;
\item Then $\pi_*(\cchi_{\widehat D,\Til W})=\cchi_{D,W}$, where
$\cchi_{\widehat D,\Til W}$ and $\cchi_{D,W}$ are defined by the
prescription for divisors with normal crossings given in
Definition~\ref{maindef}.
\end{itemize}
Recall that $Z$ meets $D$ with normal crossings if at each point $z$
of $Z$ there is an analytic system of parameters $x_1,\dots,x_n$ for
$D$ at $z$ such that $Z$ is given by $x_1=\cdots=x_{r+1}=0$, and $D$
is given by a monomial in the $x_\ell$'s. The divisor $\widehat
D=\pi^{-1}(D)$ is then a divisor with normal crossings, cf.~Lemma~2.4
in \cite{MR2098642}. The divisor $\widehat D$ consists of the proper
transforms $\Til D_\ell$ of the components $D_\ell$, appearing with
the same multiplicity~$m_\ell$, and of the exceptional divisor $E$,
appearing with multiplicity $\sum_{D_\ell\supset Z} m_\ell$. It is
clear that $\pi_*(\cchi_{\widehat D,\Til W})$ agrees with
$\cchi_{D,W}$ away from $Z$; we have to verify that the functions
match at all $z\in Z$. The fiber of $E=\pi^{-1}(Z)$ over $z$ is a
projective space of dimension $r=\codim_ZW-1$; we have to analyze the
intersection of the rest of $\pi^{-1}(D)$ (that is, of the proper
transforms $\Til D_\ell$) with this projective space.

Now there are two kinds of points $z\in Z$: either $z$ is in exactly
one component $D_\ell$, or it is in the intersection of several
components. In the first case, $\Til D_\ell$ is the unique component
of $\pi^{-1}(D)$ other than $E$ meeting the fiber $F$ of $E$ over
$z$. By definition, $\cchi_{\widehat D,\Til W}=m_\ell$ on the
complement of $F\cap \Til D_\ell$ in $\Til D_\ell$, and
$\cchi_{\widehat D,\Til W}=0$ along $F\cap \Til D_\ell$. Thus,
$\pi_*(\cchi_{\widehat D,\Til W})(z)$ equals
\[
m_\ell\cdot \chi(F\smallsetminus (F\cap \Til D_\ell))+ 0\cdot
\chi(F\cap \Til D_\ell) =m_\ell =\cchi_{D,W}(z)\quad,
\]
since $F\cong \Pbb^r$ and $F\cap \Til D_\ell\cong \Pbb^{r-1}$. This is
as it should.  If $z$ is in the intersection of two or more components
$D_\ell$, then $\cchi_{D,W}(z)=0$, so we have to verify that
$\pi_*(\cchi_{\widehat D,\Til W})(z)=0$. Again there are two
possibilities: either one of the components containing $z$ does not
contain $Z$, and then the whole fiber $F\cong \Pbb^r$ is contained in
the proper transform of that component, as well as in $E$; or all the
components $D_\ell$ containing $z$ contain $Z$. In the first case, the
value of $\cchi_{\widehat D,\Til W}$ is zero on the whole fiber $F$,
because $F$ is in the intersection of two components of~$\widehat D$;
so the equality is clear in this case.

In the second case, let $D_1,\dots,D_e$ be the components of $D$
containing $Z$; no other component of $D$ contains $z$, by
assumption. The proper transforms $\Til D_\ell$ meet the fiber $F
\cong \Pbb^r$ along $e$ hyperplanes meeting with normal crossings,
$1<e\le r+1$. The value of $\cchi_{\widehat D,\Til W}$ along~$F$ is
then $m_1+\cdots+ m_e$ along the complement $U$ of these hyperplanes,
and $0$ along these hyperplanes. Thus,
\[
\pi_*(\cchi_{\widehat D,\Til W})(z)=(m_1+\cdots+m_e)\cdot
\chi(U)\quad.
\]
The proof will be complete if we show that $\chi(U)=0$; and this is
done in the elementary lemma that follows.
\end{proof}

\begin{lemma}\label{elemlem}
Let $H_1,\dots, H_e$ be hyperplanes in $\Pbb^r$ meeting with normal
crossings, with $1\le e\le r+1$, and let $U=\Pbb^r\smallsetminus
(H_1\cup\cdots\cup H_e)$.  Then $\chi(U)=1$ for $e=1$, and $\chi(U)=0$
for $e=2,\cdots,r+1$.
\end{lemma}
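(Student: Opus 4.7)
The plan is to reduce to a coordinate model and then compute the Euler characteristic via multiplicativity. Since the $H_i$ meet with normal crossings in $\Pbb^r$ and $e\le r+1$, their defining linear forms $\ell_1,\ldots,\ell_e$ are linearly independent: at any point of a nonempty intersection $H_{i_1}\cap\cdots\cap H_{i_k}$ the forms $\ell_{i_1},\ldots,\ell_{i_k}$ are independent by normal crossings, and an easy argument (picking a maximal linearly independent subset and noting that any excluded $\ell_i$ would then vanish on the common zero of the chosen ones) upgrades this to global independence of $\ell_1,\ldots,\ell_e$. We may therefore choose homogeneous coordinates $[x_0:x_1:\cdots:x_r]$ on $\Pbb^r$ in which $H_i=\{x_i=0\}$ for $i=1,\ldots,e$.

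In these coordinates $U$ is simply the locus where $x_1,\ldots,x_e$ are all nonzero. Using the scaling action to normalize $x_1=1$ identifies
\[
U \;\cong\; (\Abb^1\smallsetminus\{0\})^{e-1}\times \Abb^{r-e+1},
\]
the torus factors recording the constraints $x_2,\ldots,x_e\neq 0$ and the affine factor recording the unconstrained coordinates $x_0,x_{e+1},\ldots,x_r$.

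To finish, I would invoke additivity and multiplicativity of the Euler characteristic for constructible sets (as recalled in the paper, with references for the positive-characteristic-zero extension already in place). From $\chi(\Abb^1\smallsetminus\{0\})=1-1=0$ and $\chi(\Abb^{r-e+1})=1$ one obtains
\[
\chi(U) \;=\; 0^{e-1}\cdot 1,
\]
which equals $1$ when $e=1$ and $0$ when $2\le e\le r+1$, as claimed. There is no genuine obstacle: the only point that requires a moment's thought is the global linear independence of the $\ell_i$, and that is precisely where the hypothesis $e\le r+1$ is used — without it, the standard coordinatization is unavailable and the product decomposition above can fail.
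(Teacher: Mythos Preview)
Your proof is correct and follows essentially the same route as the paper: reduce to coordinate hyperplanes, dehomogenize by setting one of the nonvanishing coordinates equal to $1$, and identify $U\cong \Tbb^{e-1}\times \Abb^{r+1-e}$, whence $\chi(U)=0^{e-1}$. The only difference is that you spell out why the defining linear forms are independent, whereas the paper simply asserts that the hyperplanes may be taken to be coordinate hyperplanes.
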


\begin{proof}
The hyperplanes may be assumed to be coordinate hyperplanes, and hence
$U$ may be described as the set of $(x_0:\cdots:x_r)$ such that the
first $e$ coordinates are nonzero. As the first coordinate is nonzero,
we may set it to be~$1$, and view the rest as affine coordinates.  It
is then clear that $U\cong \Abb^{r+1-e}\times \Tbb^{e-1}$, where $\Tbb
=\Abb\smallsetminus \{0\}$ is the $1$-dimensional torus. The statement
is then clear, since $\chi(\Tbb)=0$.
\end{proof}

\begin{remark}
By definition, the value of $\cchi_{X,V}$ at a point $p\in X$ is
\[
\cchi_{X,V}=\sum_{\ell} m_\ell\, \chi(D_\ell^\circ\cap
w^{-1}(p))\quad,
\]
where $D_\ell^\circ=D_\ell\smallsetminus \cup_{k\ne \ell} D_k$.  In
the complex hypersurface case, this equals the Euler characteristic
$\chi(F_\theta)$ of the Milnor fiber, by formula (2) in Theorem~1 of
\cite{MR0371889}. An analogous interpretation holds in general, as
may be established by using Theorem~3.2 in \cite{VeysProeyen}.
\end{remark}

\begin{remark}\label{distdecfun}
Let $\alpha: \Zbb \to \Zbb$ be any function. The argument proving
Lemma~\ref{spepro} shows that one may define a constructible function
$\cchi_{X,V}^\alpha$ on $X$ as the push-forward of the function
$\cchi_{D,W}^\alpha$ with value $\alpha(m_\ell)$ on $D_\ell
\smallsetminus \cup_{k\ne \ell} D_k$ and $0$ on intersections, for $D$
and $W$ as in Definition~\ref{maindef}.

For $\alpha\ne$ identity we do not have an interpretation for
$\cchi_{X,V}^\alpha$ (or for the corresponding Chern class,
cf.~Remark~\ref{distdeccla}).  Letting $\epsilon_m$ be the function
that is $1$ at $m\in \Zbb$ and $0$ at all other integers, we have a
decomposition of the identity as $\sum_m m\, \epsilon_m$, and hence a
distinguished decomposition
\[
\cchi_{X,V}=\sum_m m\, \cchi_{X,V}^{\epsilon_m}\quad.
\]
The individual terms in this decomposition are clearly preserved by
the morphisms considered in Propositions~\ref{birinv}
and~\ref{smooth}. In the hypersurface case they can be related to the
monodromy action on the Milnor fiber, as the nonzero contributions
arise from the nonzero eigenspaces of monodromy (cf.~\cite{MR0371889},
Theorem~4; and Theorem~3.2 in~\cite{VeysProeyen} for generalizations
to the non-hypersurface case). From this perspective, the piece
$\cchi_{X,V}^{\epsilon_1}$ may be thought of as the `unipotent part'
of $\cchi_{X,V}$.
\end{remark}

\subsection{Basic properties and examples}
Theorem~II from the introduction is an immediate consequence of the
definition of~$\cchi_{X,V}$:

\begin{prop}\label{birinv}
Let $V'$ be a variety, and let $\pi: V' \to V$ be a proper
morphism. Let $X'=\pi^{-1}(X)$, and assume that $\pi$ restricts to an
isomorphism $V'\smallsetminus X' \to V\smallsetminus X$. Then
\[
\pi_* (\cchi_{V',X'})=\cchi_{V,X}\quad.
\]
\end{prop}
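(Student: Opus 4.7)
The plan is to exploit the independence of $\cchi_{X,V}$ from the resolution (Lemma~\ref{spepro}) by choosing compatible resolutions for $V'$ and $V$. Since $V'\smallsetminus X'$ is isomorphic to $V\smallsetminus X$ it is nonsingular, so by resolution of singularities there exists a proper birational morphism $w': W\to V'$ with $W$ nonsingular, $D:=(w')^{-1}(X')$ a divisor with normal crossings and nonsingular components, and $w'$ an isomorphism over $V'\smallsetminus X'$. Composing with $\pi$ gives $w:=\pi\circ w': W\to V$; this $w$ is proper (composition of proper maps), satisfies $w^{-1}(X)=(w')^{-1}(\pi^{-1}(X))=(w')^{-1}(X')=D$, and restricts to an isomorphism $W\smallsetminus D\to V\smallsetminus X$ since $w'$ and $\pi|_{V'\smallsetminus X'}$ both do.

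Thus both $w'$ and $w$ are admissible choices in Definition~\ref{maindef}, sharing the same ambient $W$, the same inverse image divisor $D$, and the same decomposition into components $D_\ell$ with the same multiplicities $m_\ell$. Consequently they define the \emph{same} function $\cchi_{D,W}$ on $D$. Let $d':D\to X'$ and $d:D\to X$ denote the restrictions of $w'$ and $w$; by construction $d=(\pi|_{X'})\circ d'$, and $\pi|_{X'}$ is proper since $\pi$ is. Applying functoriality of push-forward of constructible functions gives
\[
\cchi_{X,V}=d_*(\cchi_{D,W})=(\pi|_{X'})_*\bigl(d'_*(\cchi_{D,W})\bigr)=(\pi|_{X'})_*(\cchi_{X',V'})=\pi_*(\cchi_{X',V'}),
\]
which is the claim.

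I do not foresee any real obstacle: the entire content is the resolution-independence already established in Lemma~\ref{spepro}, which allows a single $W$ to serve as a resolution for both $V'$ and $V$. The only point worth checking carefully is that the composed morphism $w=\pi\circ w'$ still meets all the requirements of Definition~\ref{maindef} for the pair $(X,V)$ — properness, the normal crossings structure of $D$, and the isomorphism off $D$ — all of which follow immediately from the corresponding properties of $w'$ and the fact that $\pi$ is an isomorphism on complements.
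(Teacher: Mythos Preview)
Your proof is correct and follows exactly the same approach as the paper: the paper's argument is the single sentence ``a resolution for the pair $X'\subset V'$ as in Definition~\ref{maindef} is also a resolution for the pair $X\subset V$,'' and you have simply spelled out the details of this observation and the functoriality of push-forward.
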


\noindent Indeed, a resolution for the pair $X'\subset V'$ as in
Definition~\ref{maindef} is also a resolution for the pair $X\subset
V$.  Another immediate consequence of the definition is the behavior
with respect to smooth maps:

\begin{prop}\label{smooth}
Let $U$ be a variety, and let $\eta: U \to V$ be a smooth morphism. Then
\[
\cchi_{\eta^{-1}(X),U}=\eta^*(\cchi_{X,V})\quad.
\]
\end{prop}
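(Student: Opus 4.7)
The plan is to start from a normal-crossings resolution $w:W\to V$ for the pair $X\subset V$ as in Definition~\ref{maindef}, pull it back along $\eta$, and check that the result is again a valid resolution for the pair $\eta^{-1}(X)\subset U$; then the claim will reduce to a compatibility between push-forward and pullback of constructible functions under a smooth base change.

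Concretely, I would form the fiber square
\[
\xymatrix{
D' \ar@{^(->}[r] \ar[d]_{d'} & W' \ar[d]^{w'} \ar[r]^{\eta_W} & W \ar[d]^w \\
X' \ar@{^(->}[r] & U \ar[r]^\eta & V
}
\]
with $X'=\eta^{-1}(X)$, $W'=W\times_V U$, and $D'=W'\times_W D$. Because $\eta$ is smooth, so is its base change $\eta_W:W'\to W$; hence $W'$ is nonsingular, and $D'$ is a divisor in $W'$ whose components are the pullbacks $D'_\ell:=\eta_W^{-1}(D_\ell)$, still nonsingular and meeting with normal crossings (pullback of a normal-crossings divisor under a smooth morphism is again normal-crossings). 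Flatness of $\eta_W$ guarantees that each $D'_\ell$ appears in $D'$ with the same multiplicity $m_\ell$ as $D_\ell$ does in $D$, and $w'$ restricts to an isomorphism $W'\smallsetminus D'\to U\smallsetminus X'$ since $w$ does away from~$D$. Thus $w':W'\to U$ is a resolution of the type prescribed by Definition~\ref{maindef} for $X'\subset U$, and directly from the pointwise definition one reads
\[
\cchi_{D',W'}=\eta_W^*\,\cchi_{D,W}\quad.
\]

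It remains to pass this identity down to $X'$ and $X$. The key point is that for the cartesian square formed by $d$ and $\eta|_{X'}$, push-forward of constructible functions commutes with the smooth (indeed, just flat) pullback: for any $u\in X'$ with image $x=\eta(u)\in X$, the scheme-theoretic fiber $d'^{-1}(u)$ is canonically isomorphic to $d^{-1}(x)$, and under this isomorphism $\eta_W^*\cchi_{D,W}$ restricts to $\cchi_{D,W}|_{d^{-1}(x)}$. Hence
\[
(d'_*\eta_W^*\cchi_{D,W})(u)=\chi\bigl(d^{-1}(x),\cchi_{D,W}\bigr)=(d_*\cchi_{D,W})(x)=(\eta^*\cchi_{X,V})(u)\quad.
\]
Combining with the previous display yields $\cchi_{X',U}=d'_*\cchi_{D',W'}=\eta^*\cchi_{X,V}$, as desired.

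The only step that is not purely formal is verifying that the resolution $w$ really does base change to a resolution of the required type, i.e.\ that normal crossings and component multiplicities are preserved; this is where smoothness (rather than mere flatness) of $\eta$ is used, via the standard fact that a smooth morphism preserves the local analytic structure of a normal-crossings divisor. Once this is in place, the base-change identity for constructible functions is routine, and the proposition follows.
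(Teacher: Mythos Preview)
Your argument is correct and is exactly what the paper has in mind: the paper's entire proof is the single sentence ``Indeed, in this case one can construct compatible resolutions,'' and you have spelled out precisely how to construct that compatible resolution via base change along $\eta$ and why the push-forward/pullback of constructible functions then commute. One small cosmetic point: the preimages $D'_\ell=\eta_W^{-1}(D_\ell)$ need not be irreducible, but they are nonsingular and their irreducible components all appear with multiplicity $m_\ell$, so the identity $\cchi_{D',W'}=\eta_W^*\cchi_{D,W}$ holds as you claim.
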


\noindent (Pull-backs of constructible functions are defined by
composition.)  Indeed, in this case one can construct compatible
resolutions.

For example, the value of $\cchi_{X,V}$ at a point $p\in X$ may be
computed by restricting to an open neighborhood of $p$.

\begin{example}\label{sci}
If $X$ and $V$ are both nonsingular varieties, then
$\cchi_{X,V}=\codim_XV\cdot \one_X$.

Indeed, if $X$ and $V$ are nonsingular, then we can let $w:W\to V$ be
the blow-up of $V$ along~$X$; $D$ is the exceptional divisor, and the
push-forward of $\one_D$ equals $\codim_XV\cdot \one_X$ because the
fibers of $d$ are projective spaces $\Pbb^{\codim_XV-1}$.
\end{example}

\begin{example}\label{nonred}
The function $\cchi_{X,V}$ depends on the scheme structure on $X$.
For example, let $X\subset \Pbb^2$ be the scheme defined by
$(x^2,xy)$, consisting of a line $L$ with an embedded point at~$p\in
L$. Then $\cchi_{X,\Pbb^2}$ equals $1$ along $L\smallsetminus \{p\}$,
while $\cchi_{X,\Pbb^2}(p)=2$.  (Blow-up at $p$, then apply
Definition~\ref{maindef}. In terms of the decomposition in
Remark~\ref{distdecfun}, $\cchi_{X,\Pbb^2}$ is the sum of
$\cchi_{X,\Pbb^2}^{\epsilon_1}=\one_{L\smallsetminus p}$ and $2
\cchi_{X,\Pbb^2}^{\epsilon_2}=2\cdot\one_p$.)  Thus,
$\cchi_{X,\Pbb^2}=\one_L+\one_p$, while of course
$\cchi_{L,\Pbb^2}=\one_L$.
\end{example}

\begin{example}
Let $X$ be the reduced scheme supported on the union of three
non-coplanar concurrent lines in $\Pbb^3$; for example, we may take
$(xy,xz,yz)$ as a defining ideal. Then $\cchi_{X,\Pbb^3}=2$ along the
three components, and $\cchi_{X,\Pbb^3} (p)=0$ at the point of
intersection~$p$.

To see this, blow-up $\Pbb^3$ at $p$ first, and then along the proper
transforms of the three lines, to produce a morphism $w:W\to \Pbb^3$
as in Definition~\ref{maindef}; $D=w^{-1}(X)$ consists of $4$
components, one of which dominates $p$. This component is a $\Pbb^2$
blown up at three points; the complement in this component of the intersection
with the three other exceptional divisors has Euler characteristic~$0$, so the 
push-forward of $\cchi_{D,W}$ equals $0$ at $p$.

If the three concurrent lines are coplanar, say with ideal
$(xy(x+y),z)$, then the value of $\cchi_{X,\Pbb^3}$ at the
intersection point is $-2$.

That the value of the function is $2$ away from the point of
intersection is clear {\em a priori\/} in both cases by
Example~\ref{sci}, since as observed above we may restrict to an open
set avoiding the singularity and use Example~\ref{sci}.
\end{example}

\subsection{A motivic invariant}\label{mot}
Definition~\ref{maindef} has a counterpart in a quotient of the naive
Grothendieck group of varieties $K(\Var_k)$. Recall that this is the
group generated by isomorphism classes of $k$-varieties, modulo the
relations $[S]=[S-T]+[T]$ for every closed embedding $T\subseteq S$;
setting $[S_1]\cdot [S_2] =[S_1\times S_2]$ makes $K(\Var_k)$ into a
ring. Denote by $\Lbb$ the class of $\Abb^1$ in $K(\Var_k)$, and by
$\Tbb=\Lbb-1$ the class of the multiplicative group of $k$.

We let $\cM^\Tbb$ denote the quotient $K(\Var_k)/(\Tbb)$. Every pair
$X\subset V$ as in Definition~\ref{maindef} (that is, with $V$ a
variety and $V\smallsetminus X$ nonsingular) determines a well-defined
element $\bbchi_{X,V}$ of $\cM^\Tbb$, as follows:
\begin{itemize}
\item Let $D,d,W,w$ be as in Definition~\ref{maindef};
\item For every component $D_\ell$ of $D$, let $D_\ell^\circ$ be the
complement $D_\ell\smallsetminus \cup_{k\ne l} D_k$;
\item Then $\bbchi_{X,V}=\sum_\ell m_\ell [D_\ell^\circ]$, where
$m_\ell$ is the multiplicity of $D_\ell$ in $D$.
\end{itemize}

The argument given in Lemma~\ref{spepro} proves that $\bbchi_{X,V}$ is
well-defined in the quotient $\cM^\Tbb$.  Indeed, the argument applies
word-for-word to show that the class of the fiber of a blow-up over
$z\in Z$ agrees mod $\Tbb$ with the class of $z$; Lemma~\ref{elemlem}
is replaced by the analogous result in $\cM^\Tbb$:

\begin{lemma}\label{elemlem2}
Let $H_1,\dots, H_e$ be hyperplanes in $\Pbb^r$ meeting with normal
crossings, with $1\le e\le r+1$, and let $U=\Pbb^r\smallsetminus
(H_1\cup\cdots\cup H_e)$.  Then $[U]=1\in\cM^\Tbb$ for $e=1$, and
$[U]=0\in\cM^\Tbb$ for $e=2,\cdots,r+1$.
\end{lemma}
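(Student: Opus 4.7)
The plan is to mimic the proof of Lemma \ref{elemlem}, replacing the Euler characteristic argument with an explicit identification in the Grothendieck ring. Since $H_1,\dots,H_e$ meet with normal crossings in $\Pbb^r$ with $e\le r+1$, their defining linear forms can be extended to a basis of linear forms on $\Pbb^r$, so up to a linear change of coordinates we may assume $H_i = \{x_{i-1}=0\}$ for $i=1,\dots,e$. This reduction is not merely topological but an isomorphism of varieties, so it is valid in $K(\Var_k)$.

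Next, I would identify $U$ explicitly as a product. The locus $U$ consists of points $(x_0:\cdots:x_r)\in\Pbb^r$ with $x_0,\dots,x_{e-1}$ all nonzero. Using $x_0\neq 0$ to dehomogenize (set $x_0=1$), the remaining coordinates $x_1,\dots,x_{e-1}$ range over $k^*$ and $x_e,\dots,x_r$ range freely over $k$. This gives an isomorphism of varieties
\[
U\;\cong\;\Tbb^{e-1}\times \Abb^{r+1-e},
\]
and therefore
\[
[U] \;=\; \Tbb^{e-1}\cdot \Lbb^{r+1-e} \quad\in\; K(\Var_k).
\]

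Finally I reduce modulo the ideal $(\Tbb)$. For $e\ge 2$, the class $[U]$ contains the factor $\Tbb$ and hence vanishes in $\cM^\Tbb$. For $e=1$, we get $[U]=\Lbb^r=(\Tbb+1)^r\equiv 1\pmod{\Tbb}$, as claimed. There is no real obstacle here: the only point worth noting is that the reduction to coordinate hyperplanes and the dehomogenization are genuine isomorphisms of $k$-varieties, which is exactly what is needed to lift the Euler-characteristic computation of Lemma \ref{elemlem} to the level of classes in $\cM^\Tbb$.
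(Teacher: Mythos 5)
Your proof is correct and follows exactly the paper's route: the paper also deduces the lemma from the isomorphism $U\cong \Abb^{r+1-e}\times \Tbb^{e-1}$ established in the proof of Lemma~\ref{elemlem}, then reduces modulo $(\Tbb)$. Your only addition is spelling out the $e=1$ computation $\Lbb^r=(\Tbb+1)^r\equiv 1$, which is a welcome but minor elaboration.
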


The proof of Lemma~\ref{elemlem} implies this statement, as it shows
that $U\cong \Abb^{r+1-e}\times \Tbb^{e-1}$. It is also evident that
$\bbchi_{X,V}$ satisfies the analogues of Propositions~\ref{birinv}
and~\ref{smooth}.  Since $\chi(\Tbb)=0$, the information carried by an
element of $\cM^\Tbb$ suffices to compute topological Euler
characteristics, but is considerably more refined: for example, the
series obtained by setting $v=u^{-1}$ in the Hodge-Deligne polynomial
can be recovered from the class in $\cM^\Tbb$.

\begin{example}
Let $X\subseteq \Pbb^3$ be a cone over a smooth plane curve
$C\subseteq \Pbb^2$ of degree~$m$.  Blowing up at the vertex $p$
yields a resolution as needed in Definition~\ref{maindef}; the
function $\cchi_{X,\Pbb^3}$ is immediately computed to be $1$ outside
of the vertex and $m\cdot (\chi(\Pbb^2)-\chi(C))=(m-1)^3+1$ at $p$;
the class $\bbchi_{X, \Pbb^3}$ equals $[C]+m\cdot(3-[C])\in
\cM^\Tbb$. For $m>2$, this is not the class of a constant.
\end{example}

If $X$ is the scheme of zeros of a nonzero function $f: V \to k$, and
$V$ is nonsingular, then a {\em motivic Milnor fiber\/} $\psi_f$ was
defined and studied by J.~Denef and F.~Loeser,
see~\cite{MR1923998},~\S3. The motivic Milnor fiber $\psi_f$ is
defined in a ring $\cM_{k, loc}^{mon}$ analogous to the ring
$K(\Var_k)$ considered above, but localized at $\Lbb$ and including
monodromy information.  The much naiver $\bbchi_{X,V}$ generalizes to
arbitrary pairs $X\subset V$ (with $V\smallsetminus X$ nonsingular) a
small part of the information carried by the Denef-Loeser motivic
Milnor fiber in the specialization case.

\begin{remark}
It is probably preferable to work in the {\em relative\/} Grothendieck
ring of varieties over~$X$, mod-ing out by classes of varieties $Z\to
X$ which fiber in tori.  Lemma~\ref{elemlem2} shows that the resulting
class $\bbchi_{X,V}^{rel}$ is also well-defined. For $p\in X$, the
fiber of $\bbchi_{X,V}^{rel}$ over $p$ is well-defined as a class in
$\cM^\Tbb$, so it has an Euler characteristic, which clearly equals
$\cchi_{X,V}(p)$. Thus, the constructible function $\cchi_{X,V}$ may
be recovered from the relative class $\bbchi_{X,V}^{rel}$.  This point
of view is developed fully for Chern classes and more in
\cite{MR2201957}, \cite{math.AG/0404512}.  (Cf.~\cite{MR1905328},
\cite{MR2106970} for the relative viewpoint on the Denef-Loeser
motivic Milnor fiber.)
\end{remark}

\begin{remark}\label{distdecmot}
There is a class $\bbchi_{X,V}^\alpha$ (resp.,
$\bbchi_{X,V}^{rel,\alpha}$) for every function $\alpha: \Zbb \to
\Zbb$, defined by $\sum_\ell \alpha(m_\ell) [D_\ell^\circ]$ on a
resolution. In particular, there is a decomposition
\[
\bbchi_{X,V}=\sum_m m\, \bbchi_{X,V}^{\epsilon_m}
\]
with $\epsilon_m$ as in Remark~\ref{distdecfun}. As observed in that
remark, in the hypersurface case this decomposition can be related
with the monodromy decomposition. If $X$ is the zero scheme of a
function on $V$, and $V$ is nonsingular, then the limit of the {\em
naive motivic zeta function\/} $Z^\text{naive}(T)$ of Denef-Loeser
(\cite{MR1905328}, Corollary~3.3) lifts $\bbchi_{X,V}^{1}$, where
$\alpha\equiv 1$ is the constant function~$1$. In fact, the
corresponding expression
\[
\sum_{|I|>0} (-\Tbb)^{|I|-1} [D^\circ_I]
\]
(where $D_I=\cap_{\ell \in I} D_\ell$, and $D^\circ_I=D_I
\smallsetminus \cup_{\ell\not\in I} D_\ell$) may be verified to be a
well-defined element of $K(\Var_k)$ for any $X\subset V$ such that
$V\smallsetminus X$ is nonsingular, and with $D$ as in
Definition~\ref{maindef}. This also follows from the weak
factorization theorem; no localization is necessary. Also, this
expression is clearly preserved by morphisms which restrict to the
identity on $V\smallsetminus X$ (as in Theorem~II). Thus,
$\bbchi_{X,V}^{1}$ admits a natural lift to $K(\Var_k)$ in general. We
do not know whether $\bbchi_{X,V}$ itself admits a natural lift to
$K(\Var_k)$.
\end{remark}


\section{Compatibility with Chern classes}\label{mainpro}

\subsection{Specialization in the Chow group and Chern-Schwartz-MacPherson 
classes}\label{basicca} 
Recall (\cite{MR35:3707}, \cite{MR32:1727}, \cite{MR50:13587}) that
every constructible function $\varphi$ determines a class in the Chow
group, satisfying good functoriality properties and the normalization
restriction of agreeing with the total Chern class of the tangent
bundle if applied to the constant function~$\one$ over a nonsingular
variety.  We call this function the {\em Chern-Schwartz-MacPherson\/}
(CSM) class of $\varphi$, $\csm(\varphi)$. The functoriality may be
expressed as follows: if $f: X \to Y$ is a proper morphism, and
$\varphi$ is a constructible function on $X$, then
$f_*(\csm(\varphi))=\csm(f_*(\varphi))$.  The simplest instance of
this property is the fact that for every complete variety $X$
(nonsingular or otherwise), the degree of $\csm(\one_X)$ equals the
Euler characteristic of $X$.

We are particularly interested in the CSM classes of the function
$\cchi_{X,V}$ defined in \S\ref{chidef}, and of the function
$\one_{V\smallsetminus X} =\one_V-\one_X$, with value $0$ on $X$ and
$1$ along the complement of $X$.

In the situation considered by Verdier (and more generally when $X$ is
e.g., a Cartier divisor in $V$), there is a natural way to specialize
classes defined on $V$ or $V\smallsetminus X$ to $X$.  We consider the
following definition of the specialization of a specific class, for
arbitrary $X\subset V$.

\begin{defin}\label{specdefspec}
Let $V$ be a variety, $X\subset V$ a closed embedding, and assume that
$V\smallsetminus X$ is nonsingular.  Let $\tilde v:\Til V \to V$ be
the blow-up of $V$ along $X$, and let $\iota: E\to \Til V$ be the
exceptional divisor, $e: E\to X$ the induced map. We define the
`specialization of $\csm(\one_{V\smallsetminus X})$ to $X$' to be the
class
\[
\sigma_{X,V}(\csm(\one_{V\smallsetminus X}))
:=e_* \iota^* (\csm(\one_{\Til V\smallsetminus E}))\in A_*X\quad.
\]
\end{defin}

The blow-up $\Til V\to V$ in Definition~\ref{specdefspec} may be
replaced with any proper birational morphism $v': V' \to V$ dominating
the blow-up and restricting to an isomorphism on $V'\smallsetminus
{v'}^{-1}(X)$; this follows easily from the projection formula and the
functoriality of CSM classes.

When $X$ is a Cartier divisor in $V$, this specialization is the
ordinary pull-back, and could be defined for arbitrary classes in $V$.

\begin{lemma}\label{spediv}
Let $X\hookrightarrow V$ be a Cartier divisor. Then $\sigma_{X,V} 
(\csm(\one_{V\smallsetminus X}))=i^*\csm(\one_{V\smallsetminus X})$.
\end{lemma}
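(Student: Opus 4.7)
The plan is to observe that the entire blow-up construction used in Definition~\ref{specdefspec} collapses to something trivial in the Cartier case. The key input is the classical fact that blowing up along a locally principal ideal is an isomorphism: since $X \hookrightarrow V$ is Cartier, the ideal sheaf $\mathcal I_X$ is locally generated by a single nonzero divisor, and its $\mathrm{Proj}$ of the Rees algebra is canonically identified with $V$ itself.

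Consequently, in the diagram of Definition~\ref{specdefspec}, the blow-up $\tilde v\colon \Til V\to V$ is the identity. The exceptional divisor $E=\tilde v^{-1}(X)$ is then identified with $X$, the inclusion $\iota\colon E\hookrightarrow \Til V$ becomes the given $i\colon X\hookrightarrow V$, and the induced map $e\colon E\to X$ is $\mathrm{id}_X$. Substituting into the definition yields
\[
\sigma_{X,V}(\csm(\one_{V\smallsetminus X}))
=e_*\iota^*\csm(\one_{\Til V\smallsetminus E})
=(\mathrm{id}_X)_*\,i^*\csm(\one_{V\smallsetminus X})
=i^*\csm(\one_{V\smallsetminus X}),
\]
as desired. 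Here the pullback $i^*$ makes sense on the class $\csm(\one_{V\smallsetminus X})\in A_*V$ precisely because $X$ is a Cartier divisor, so $i$ is a regular embedding of codimension one and Fulton-style Gysin pullback applies.

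The only potential worry would be the well-definedness of $\sigma_{X,V}$ when one chooses a different birational model, but this was already handled in the remark following Definition~\ref{specdefspec}: any proper birational $v'\colon V'\to V$ dominating the blow-up and restricting to an isomorphism outside $X$ computes the same specialization via the projection formula and the functoriality of CSM classes. Here we simply use the most economical choice, namely the blow-up itself, which happens to be the identity. No serious obstacle arises; the content of the lemma is just this identification of the Cartier blow-up with $V$.
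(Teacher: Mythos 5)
Your proposal is correct and matches the paper's proof, which consists of the single observation that in the Cartier case $\Til V=V$, $E=X$, and $e$ is the identity; you have simply spelled out the standard fact (blowing up an invertible ideal is an isomorphism) that justifies this. Nothing further is needed.
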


\begin{proof}
In this case $\Til V=V$, $E=X$, and $e$ is the identity.
\end{proof}

If $X\subset V$ is {\em not\/} a Cartier divisor, it is not clear how
to extend Definition~\ref{specdefspec} to arbitrary classes; but this
is not needed for the results in this paper.

By Lemma~\ref{spediv}, Theorem~I from the introduction
follows from the following result.

\begin{theorem}\label{spethm}
With notation as above,
\[
\csm(\cchi_{X,V})=\sigma_{X,V}( \csm(\one_{V\smallsetminus X}))\quad.
\]
\end{theorem}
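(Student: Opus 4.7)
First I would fix a single resolution $w: W \to V$ as in Definition~\ref{maindef} and use it simultaneously for both sides. Since $D=w^{-1}(X)$ is an effective Cartier divisor on $W$, the universal property of the blow-up forces $w$ to factor through $\tilde v: \tilde V \to V$, and by construction $w$ restricts to an isomorphism outside $D$. Consequently, by the remark following Definition~\ref{specdefspec},
\[
\sigma_{X,V}(\csm(\one_{V\smallsetminus X})) = d_*\, j^*\, \csm(\one_{W\smallsetminus D}),
\]
where $j: D\hookrightarrow W$ denotes the inclusion. By functoriality of CSM classes under the proper morphism $d$,
\[
\csm(\cchi_{X,V}) = d_*\,\csm(\cchi_{D,W}).
\]
The theorem therefore reduces to the identity
\[
\csm(\cchi_{D,W}) = j^*\,\csm(\one_{W\smallsetminus D})
\]
in $A_*D$.

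To establish this identity, I would invoke the closed formula for the CSM class of the complement of a normal crossings divisor with nonsingular components in a smooth variety, namely
\[
\csm(\one_{W\smallsetminus D}) = \frac{c(TW)}{\prod_\ell(1+D_\ell)} \cap [W].
\]
Since $[D]=\sum_\ell m_\ell [D_\ell]$, the Gysin pullback for the Cartier divisor $D$ gives
\[
j^*\,\csm(\one_{W\smallsetminus D}) = \frac{c(TW)|_D}{\prod_\ell(1+D_\ell)|_D} \cap [D].
\]
For the left-hand side, decompose $\cchi_{D,W}=\sum_\ell m_\ell\,\one_{D_\ell^\circ}$ and apply the same formula on each smooth $D_\ell$, whose boundary normal crossings divisor is $\cup_{k\ne\ell}(D_\ell\cap D_k)$; combined with the adjunction-type identity $c(TD_\ell)=c(TW)|_{D_\ell}/(1+D_\ell|_{D_\ell})$, this gives
\[
\csm(\one_{D_\ell^\circ}) = \frac{c(TW)|_{D_\ell}}{\prod_k(1+D_k)|_{D_\ell}} \cap [D_\ell].
\]
Pushing forward to $D$, weighting by $m_\ell$, summing, and pulling the common operational Chern class out of the sum collapses the total to the same expression, since $\sum_\ell m_\ell [D_\ell]=[D]$ in $A_*D$.

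The essential (and essentially only) nontrivial input is the closed formula for $\csm(\one_{W\smallsetminus D})$ in the normal crossings case, which I would invoke as a known result. Beyond that, the proof is a matter of keeping careful track of pullbacks, pushforwards, and operational Chern classes across $W$, $D$, and its components, so I do not anticipate a substantive obstacle once the framework is set up.
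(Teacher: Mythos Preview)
Your proposal is correct and follows essentially the same approach as the paper: both reduce to the identity $\csm(\cchi_{D,W})=j^*\csm(\one_{W\smallsetminus D})$ in the normal crossings case and prove it via the logarithmic formula $\csm(\one_{W\smallsetminus D})=\frac{c(TW)}{\prod_\ell(1+D_\ell)}\cap[W]$ together with the decomposition $\cchi_{D,W}=\sum_\ell m_\ell\one_{D_\ell^\circ}$. The only organizational difference is that you invoke the remark after Definition~\ref{specdefspec} to replace $\tilde V$ by $W$ directly, whereas the paper factors $w$ through $\tilde V$ explicitly and re-derives that compatibility via Fulton's Theorem~6.2 (the equality $d'_*j^*=\iota^*w'_*$ for the fiber square over $E$); since the remark itself rests on that same argument, the two proofs have identical content.
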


\begin{proof}
First consider the case in which $V=W$ is nonsingular, and $X=D$ is a
divisor with normal crossings and nonsingular components $D_\ell$,
appearing with multiplicity $m_\ell$. Let $D_\ell^\circ\subseteq
D_\ell$ be the complement of $\cup_{k\ne\ell} D_k$ in $D_\ell$. By
definition of $\cchi_{D,W}$, and by linearity of the $\csm$ operator,
\[
\csm(\cchi_{D,W})=\sum_\ell m_\ell\, \csm(\one_{D_\ell^\circ})\quad.
\]
The key to the computation is the fact that the CSM class of the
complement of a divisor with normal crossings in a nonsingular variety
is given by the Chern class of a corresponding logarithmic twist of
the tangent bundle; for this, see e.g.~(*) at the top of p.~4002 in
\cite{MR2001i:14009}.  As $D_\ell^\circ$ is the complement of
$\cup_{k\ne \ell} D_k\cap D_\ell$, a normal crossing divisor in
$D_\ell$, we have (omitting evident pull-backs)
\[
\csm(\one_{D_\ell^\circ})=\frac{c(TD_\ell)}{\prod_{k\ne\ell}
(1+D_k)}\cap [D_\ell] =\frac{c(TW)}{\prod_k (1+D_k)}\cap
[D_\ell]\quad.
\]
Therefore
\[
\csm(\cchi_{D,W})=\frac{c(TW)}{\prod_k (1+D_k)}\cap \sum_\ell
m_\ell\,[D_\ell] =\frac{c(TW)}{\prod_k (1+D_k)}\cap [D]\quad.
\]
Since $\csm(\one_{W\smallsetminus D})=\frac{c(TW)}{\prod_k
(1+D_k)}\cap [W]$, this shows that
\begin{equation*}
\tag{*}
\csm(\cchi_{D,W})=j^* \csm(\one_{W\smallsetminus D})\quad,
\end{equation*}
where $j: D\hookrightarrow W$ is the inclusion. This is the statement
in the normal crossing case.

Now assume $X\subset V$ is any closed embedding, and apply (*)
to $D$, $W$ as in Definition~\ref{maindef},
observing that any such $w:W\to V$ must factor through the blow-up
$\Til V$ along $X$. Let $w': W\to \Til V$, $d': D \to E$ be the
induced morphisms, so that $w=\tilde v\circ w'$, $d=e\circ d'$:
\[
\xymatrix@R=15pt@C=15pt{
& D \ar[rr]^j \ar[ld]_{d'} \ar@{-}[d] & & W \ar[ld]_(.6){w'} \ar[dd]^(.6)w \\
E \ar[rr]^(.7)\iota \ar[dr]_(.4)e & \ar[d]^(.4)d & \Til V \ar[dr]_(.3){\tilde v} \\
& X \ar[rr]^(.4)i & & V
}
\]
By the covariance of $\csm$, we have
\begin{align*}
\csm(\cchi_{E,\Til V}) &=\csm(d'_* \cchi_{D,W}) \\
&= d'_* \csm(\cchi_{D,W}) \\
&= d'_* j^* \csm(\one_{W\smallsetminus D}) \\
&\overset\dagger= \iota^* w'_* \csm(\one_{W\smallsetminus D}) \\
&= \iota^* \csm(w'_*(\one_{W\smallsetminus D})) \\
&= \iota^* \csm(\one_{\Til V\smallsetminus E})\quad,
\end{align*}
where $\overset\dagger=$ holds by Theorem~6.2 in \cite{MR85k:14004}
(note that $j^*=j^!=\iota^!$ as both $D$ and $E$ are Cartier
divisors). We have $w'_*(\one_{W\smallsetminus D})
=\one_{V'\smallsetminus E}$ since $w'$ is an isomorphism off $D$.
Finally, using Proposition~\ref{birinv} and again the covariance of
$\csm$:
\begin{align*}
\csm(\cchi_{X,V}) &=\csm(e_* \cchi_{E,\Til V}) \\
& =e_*\csm(\cchi_{E,\Til V}) \\
& =e_* \iota^* \csm(\one_{\Til V\smallsetminus E})\\
&=\sigma_{X,V} (\csm(\one_{V\smallsetminus X}))
\end{align*}
as claimed.
\end{proof}

\begin{example}
Let $X\subseteq \Pbb^2$ be the line $L$ with embedded point $p$
considered in Example~\ref{nonred}. By Theorem~\ref{spethm},
\[
\sigma_{X,\Pbb^2}(\csm(\one_{\Pbb^2\smallsetminus X}))
=\csm(\cchi_{X,\Pbb^2})=\csm(\one_L+\one_p)=[L]+3[p]\quad.
\]
(This may of course be verified by hand using
Definition~\ref{specdefspec}.)  Here, $\csm(\one_{\Pbb^2\smallsetminus
X})=[\Pbb^2]+2[\Pbb^1]+[\Pbb^0]$.  Restricting this class to $L$ gives
$[L]+2[p]$; the specialization $\sigma_{X,\Pbb^2}$ picks up an extra
$[p]$ due to the embedded point.
\end{example}

\begin{remark}\label{regemb}
By Lemma~\ref{spediv}, if $i: X\to V$ embeds $X$ as a
Cartier divisor in $V$, then $\sigma_{X,V}$ acts as the pull-back to
$X$, so that in this case Theorem~\ref{spethm} states that
\[
\csm(\cchi_{X,V})=i^* \csm(\one_{V\smallsetminus X})\quad.
\]
This is not the case if $i: X\to V$ is a regular embedding of higher
codimension.  If the embedding is regular, then the exceptional
divisor $E\subset \Til V$ may be identified with the projective normal
bundle to $X$, and as such it has a universal quotient bundle
$\cN$. Tracing the argument proving Theorem~\ref{spethm} shows that
\[
i^* \csm(\one_{V\smallsetminus X})=e_*\left( c_{\text{top}}(\cN)
\cap \csm(\cchi_{E,\Til V})\right)\quad;
\]
and in fact
\[
i^* \csm(\one_{V\smallsetminus X})=d_*\left(
c_{\text{top}}\left(\frac{d^*N_XV} {N_DV}\right) \cap
\csm(\cchi_{D,W})\right)
\]
for any $W$, $D$, etc.~as in Definition~\ref{maindef}.
\end{remark}

\begin{warning}
When $X\subset V$ is a regular embedding, so that the blow-up morphism
$\Til V \to V$ is a local complete intersection morphism
(cf.~\S6.7 in \cite{MR85k:14004}) one may be
tempted to define a specialization of classes from $V$ to $X$ by
pulling back to the blow-up, restricting to the exceptional divisor,
and pushing forward to $X$. However, the pull-back of
$\csm(\one_{V\smallsetminus X})$ may not agree with $\csm(\one_{\Til
V\smallsetminus E})$: the blow-up of $V=\Pbb^2$ at $X=$ a point
already gives a counterexample. (Corollary~4.4 in \cite{MR2504753}
provides a condition under which a similar pull-back formula does
hold.) Thus, this operation does not agree in general with the class
defined in Definition~\ref{specdefspec}.
\end{warning}

\begin{remark}\label{distdeccla}
Since $\cchi_{X,V}$ admits a distinguished decomposition $\sum_m m
\cchi_{X,V}^{\epsilon_m}$ (see Remark~\ref{distdecfun}), so does the
specialization of $\csm(\one_{V\smallsetminus X})$:
\[
\sigma_{X,V}( \csm(\one_{V\smallsetminus X}))=\sum_m m\,
\csm(\cchi_{X,V}^{\epsilon_m})\quad.
\]
In the classical case of specializations along the zero set of a
function, this means that every eigenspace of monodromy carries a
well-defined piece of the Chern class, a fact we find intriguing, and
which we are not sure how to interpret for more general $X\subset
V$. In fact, any $\alpha: \Zbb \to \Zbb$ determines a class
$\csm(\cchi_{X,V}^\alpha)$; Theorem~\ref{spethm} gives an
interpretation of this class for $\alpha=$ identity.  It would be
interesting to interpret this class for more general $\alpha$.
\end{remark}

\subsection{Specialization to the central fiber of a morphism}\label{specenfi}
To interpret Theorem~\ref{spethm} in terms of standard
specializations, consider the template situation in which $V$ fibers
over a nonsingular curve $T$, and $X$ is the fiber over a marked point
$0\in T$:
\[
\xymatrix{
X \ar@{^(->}[r] \ar[d] & V \ar[d]^v \\
\{0\} \ar@{^(->}[r] & T 
}
\]
Further assume for simplicity that the restriction of $v$ to
$V\smallsetminus X$ is a trivial fibration:
\[
V\smallsetminus X\cong (T\smallsetminus \{0\})\times V_t\quad,
\]
with nonsingular (`general') fiber $V_t$. (In particular,
$V\smallsetminus X$ is nonsingular, as in \S\ref{basicca}.)  Here $X$
is a Cartier divisor in $V$, so the specialization of
Definition~\ref{specdefspec} agrees with ordinary pull-back
(Lemma~\ref{spediv}). We can use this pull-back to specialize
classes from the general fiber~$V_t$ to the special fiber
$v^{-1}(0)=X$, as follows.  The pull-back $i^*: A_*V \to A_*X$ factors
through $A_*(V\smallsetminus X)$ via the basic exact sequence of
Proposition~1.8 in \cite{MR85k:14004}: indeed, for $\alpha\in A_*X$,
$i^*i_*(\alpha)=c_1(N_XV)\cap \alpha=0$ as $N_XV$ is trivial.  We may
then define a specialization morphism by composing this morphism with
flat pull-back:
\[
\sigma_*: A_* V_t \to A_*(V\smallsetminus X) \to A_*X\quad.
\]

\begin{corol}\label{specialca}
In the specialization situation just described,
\[
\csm(\cchi_{X,V})=\sigma_* (c(TV_t)\cap [V_t])\quad.
\]
\end{corol}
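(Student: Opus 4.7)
The plan is to combine Theorem~\ref{spethm} with the definition of $\sigma_*$ to reduce the claim to a vanishing in $A_*X$ that is forced by the triviality of the normal bundle $N_XV$. Since $X$ is a Cartier divisor in $V$, Lemma~\ref{spediv} identifies $\sigma_{X,V}(\csm(\one_{V\smallsetminus X}))$ with $i^*\csm(\one_{V\smallsetminus X})$, and Theorem~\ref{spethm} identifies the latter with $\csm(\cchi_{X,V})$. Unpacking the definition of $\sigma_*$ given in the paragraph preceding the corollary, the right-hand side of the corollary equals $i^*\pi^*\csm(\one_{V_t})$. The corollary is therefore equivalent to the assertion that the class
\[
\csm(\one_{V\smallsetminus X}) - \pi^*\csm(\one_{V_t}) \in A_*(V\smallsetminus X)
\]
is killed by the descended Gysin map $i^*:A_*(V\smallsetminus X)\to A_*X$.

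To compute this difference I would use the product decomposition $V\smallsetminus X\cong(T\smallsetminus\{0\})\times V_t$ directly. Since all spaces involved are nonsingular, $\csm(\one_{V\smallsetminus X})=c(T(V\smallsetminus X))\cap[V\smallsetminus X]$, and $T(V\smallsetminus X)=T_\pi\oplus\pi^*TV_t$ with $T_\pi$ a line bundle. A short manipulation then yields
\[
\csm(\one_{V\smallsetminus X})-\pi^*\csm(\one_{V_t}) = c_1(T_\pi)\cap\pi^*\csm(\one_{V_t}).
\]
The product decomposition also identifies $T_\pi$ with the restriction of $v^*TT$ to $V\smallsetminus X$, so $c_1(T_\pi)$ is the restriction of the divisor class $v^*c_1(TT)\in A^1V$.

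To finish, I would lift $\pi^*\csm(\one_{V_t})$ to some $\tilde\alpha\in A_*V$ (possible by right-exactness of the localization sequence $A_*X\to A_*V\to A_*(V\smallsetminus X)\to 0$), so that $v^*c_1(TT)\cap\tilde\alpha\in A_*V$ is a lift of the difference. By compatibility of Gysin pullback with Chern class operators,
\[
i^*\bigl(v^*c_1(TT)\cap\tilde\alpha\bigr) = c_1\bigl((v\circ i)^*TT\bigr)\cap i^*\tilde\alpha = 0,
\]
the vanishing being precisely the triviality of $(v\circ i)^*TT\cong N_XV$ already noted in \S\ref{specenfi} in order to see that $i^*$ descends to $A_*(V\smallsetminus X)$. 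I do not anticipate a serious obstacle here: the only moderately delicate point is the bookkeeping that produces the formula for the difference in terms of $c_1(T_\pi)$, and from there everything reduces to standard compatibilities of Gysin pullback with Chern classes together with the triviality of $N_XV$.
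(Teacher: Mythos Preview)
Your outline coincides with the paper's: use Theorem~\ref{spethm} and Lemma~\ref{spediv} to rewrite the left-hand side as $i^*\csm(\one_{V\smallsetminus X})$, and then match this with $\sigma_*(c(TV_t)\cap[V_t])$ by working in $A_*(V\smallsetminus X)$. But you skip the one step the paper flags as requiring care. When you replace $i^*\csm(\one_{V\smallsetminus X})$ (with $\csm(\one_{V\smallsetminus X})\in A_*V$) by the descended $i^*$ applied to a class in $A_*(V\smallsetminus X)$, you are using that the restriction of $\csm(\one_{V\smallsetminus X})\in A_*V$ to $V\smallsetminus X$ equals $c(T(V\smallsetminus X))\cap[V\smallsetminus X]$. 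Your justification, ``since all spaces involved are nonsingular'', does not cover this: $V$ itself is \emph{not} assumed nonsingular---it may be singular along $X$---so $\csm(\one_{V\smallsetminus X})\in A_*V$ is produced by MacPherson's construction on a singular variety, and MacPherson's axioms only give functoriality for proper push-forward, not for restriction to open subsets. The equality you need is precisely the paper's ``claim'', which the author establishes by passing to a log resolution $w:W\to V$ (so $W$ really is nonsingular), invoking the logarithmic tangent bundle formula $\csm(\one_{W\smallsetminus D})=c(TW(-\log D))\cap[W]$, and pushing forward via Proposition~1.7 of \cite{MR85k:14004}. Without that argument (or an appeal to a general Verdier--Riemann--Roch theorem for $\csm$ under open pullback, which the paper does not assume), your reduction to the $c_1(T_\pi)$ computation is not justified.

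Your handling of the discrepancy $c(T(V\smallsetminus X))\cap[V\smallsetminus X]-\pi^*(c(TV_t)\cap[V_t])$ via $c_1(T_\pi)=c_1(v^*TT)|_{V\smallsetminus X}$ and the triviality of $(v\circ i)^*TT$ is correct and arguably tidier than the paper, which folds this into ``by definition of $\sigma_*$''. But that is a side issue; the substantive gap is the open-restriction compatibility of $\csm$ described above.
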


\begin{proof}
Since the normal bundle of $V_t$ in $V\smallsetminus X$ is trivial,
and abusing notation slightly, $c(TV_t)\cap [V_t]=c(T(V\smallsetminus
X)\cap [V_t])$. Hence, by definition of $\sigma_*$ we have
\[
\sigma_* (c(TV_t)\cap [V_t])=i^*(\tau)\quad,
\]
where $\tau\in A_*V$ is any class whose restriction to
$A_*(V\smallsetminus X)$ equals $c(T(V\smallsetminus X)\cap
[V\smallsetminus X])$.  We claim that $\tau=\csm(\one_{V\smallsetminus
X})$ is such a class; it then follows that
\[
\sigma_* (c(TV_t)\cap [V_t])=i^*(\csm(\one_{V\smallsetminus X}))
=\sigma_{X,V}(\csm(\one_{V\smallsetminus X}))=\csm(\cchi_{X,V})
\quad,
\]
by Theorem~\ref{spethm} (and Lemma~\ref{spediv}).  Our claim is
essentially immediate if $V$ itself is nonsingular, but as we are only
assuming $V\smallsetminus X$ to be nonsingular we have to do a bit of
work. Again consider a resolution $w: W\to V$ as in
Definition~\ref{maindef}.  Let $w':(W\smallsetminus D) \to
(V\smallsetminus X)$ be the restriction (an isomorphism by
hypothesis). Also, let $i': V\smallsetminus X \to V$ and $j':
W\smallsetminus D \to W$ be the open inclusions. Thus we have the
fiber diagram
\[
\xymatrix{
W\smallsetminus D \ar@{^(->}[r]^-{j'} \ar[d]_{w'} & W \ar[d]^w \\
V\smallsetminus X \ar@{^(->}[r]^-{i'} & V
}
\]
with $i'$ and $j'$ flat, and $w$, $w'$ proper. We have
\begin{align*}
c(T(V\smallsetminus X))\cap [V\smallsetminus X] 
&= w'_* c(T(W\smallsetminus D))\cap [W\smallsetminus D] \\
&\overset{(1)}= w'_* {j'}^* c(TW(-\log D))\cap [W] \\
&\overset{(2)}= {i'}^* w_* c(TW(-\log D))\cap [W] \\
&\overset{(3)}= {i'}^* w_* \csm(\one_{W\smallsetminus D}) \\
&= {i'}^* \csm(\one_{V\smallsetminus X})
\end{align*}
as claimed. Equality (1) holds by definition of $TW(-\log D)
=(\Omega^1_W(\log D))^\vee$; equality~(2) follows from Proposition~1.7
in \cite{MR85k:14004}; equality (3) is again the computation of the
CSM class of the complement of a divisor with normal crossings
mentioned in the proof of Theorem~\ref{spethm}.
\end{proof}

Summarizing, in the strong specialization situation detailed above,
$\cchi_{X,V}$ is the constructible function on the central fiber $X$
corresponding via MacPherson's natural transformation to the
specialization of the Chern class of the general fiber.

\begin{remark}\label{spegen}
The hypothesis of triviality of the family away from $0\in T$ is not
necessary, if specialization is interpreted appropriately. 
Let $v: V\to T$ be a morphism to a nonsingular curve, and let
$X=v^{-1}(0)$ as above; assume our blanket hypothesis that
$V\smallsetminus X$ is nonsingular, but no more. 
As $V$ is nonsingular
away from $X$, we may assume $v$ is smooth, after replacing $T$ with a
neighborhood of $0$ in $T$.  The fiber $V_t$ is then smooth for all
$t\ne 0$ in $T$, and since $N_{V_t}V$ is trivial, we get
\[
\csm(\one_{V_t}) =c(V_t)\cap [V_t]=V_t\cdot (c(T(V\smallsetminus X))
\cap [V\smallsetminus X])\quad;
\]
since $\csm(\one_V)$ restricts to $c(T(V\smallsetminus X))
\cap [V\smallsetminus X]$ on $V\smallsetminus X$, this gives
\begin{equation*}
\tag{$\dagger$}
\csm(\one_{V_t}) =V_t\cdot \csm(\one_V)\quad.
\end{equation*}
This computation fails for $t=0$, as the fiber $X$ over $0$ is (possibly)
singular. However, the classes
$\csm(\one_V)$ and $\csm(\one_{V\smallsetminus X})$ have the same
specialization to $X$: indeed, their difference is supported on $X$,
and $N_XV$ is trivial in the case considered here. Thus, in this case 
Theorem~\ref{spethm} gives
\begin{equation*}
\tag{$\ddagger$}
\csm(\cchi_{X,V})=X\cdot \csm(\one_V)\quad.
\end{equation*}
Comparing ($\dagger$) and ($\ddagger$), we may still view $\cchi_{X,V}$ 
as the limit of the constant $\one_{V_t}$ as $t\to 0$.
\end{remark}

\begin{remark}
We can also consider specializations arising from maps $v: V\to T$,
allowing $T$ to be a nonsingular variety of arbitrary dimension with a
marked point $0$, and $X=v^{-1}(0)$ a local complete intersection of
codimension $\dim T$, with trivial normal bundle. We still assume
$V\smallsetminus X$ to be nonsingular.  Using the formula in
Remark~\ref{regemb}, the specialization $i^* \csm(\one_{V
\smallsetminus X})$ may be written as
\[
i^* \csm(\one_{V\smallsetminus X}) = d_* \left((-D)^{\dim T-1}\cdot
\csm(\cchi_{D,W})\right)\quad,
\]
where $W$, $D$, $d$ are as in Definition~\ref{maindef}.
Indeed, since $N_XV$ is trivial, then
\[
c_{\text{top}}\left(\frac{d^* N_XV}{N_DV}\right)=
\text{term of codimension $(\dim T-1)$ in }
\frac 1{1+D}=(-D)^{\dim T-1}\quad.
\] 

Note that in this situation $d_*(-D)^{\dim T-1}\cdot [D]=[X]$: indeed,
the Segre class of $D$ in $W$ pushes forward to the Segre class of $X$
in $V$, by the birational invariance of Segre classes.

As mentioned in the introduction, Sch\"urmann has considered
specialization to a complete intersection, by iterating applications
of Verdier specialization (\cite{Schu}, Definition~3.6). It would be
interesting to establish a precise relation between Sch\"urmann's
specialization and the formula given above.
\end{remark}


\section{Example: pencils of curves}\label{pencils}

\subsection{}\label{spesu}
Pencils of hypersurfaces give rise to specializations, as follows.

Consider a pencil of hypersurfaces in a linear system in a nonsingular
variety $V'$. Let the pencil be defined by the equation
\[
F+tG=0
\]
where $F$ and $G$ are elements of the system, and $t\in k$. Assume
that for any $t\neq 0$ in a neighborhood of $0$, $V_t=\{F+tG=0\}$ is
nonsingular.  We can interpret this datum as a specialization by
letting $V$ be the correspondence
\[
V=\{(p,t)\in V'\times \Abb^1 \,|\, F(p)+tG(p)=0\}\quad.
\]
This is endowed with a projection $v: V \to T=\Abb^1$; after removing
a finite set of $t\ne 0$ from~$\Abb^1$ (that is, those $t\ne 0$ for
which the fiber $F+tG=0$ is singular), we reach the standard situation
considered in \S\ref{specenfi} (Remark~\ref{spegen}):
\begin{itemize}
\item $T$ is a nonsingular curve, and $0\in T$;
\item $v: V\to T$ is a surjective morphism, and $X=v^{-1}(0)$;
\item $V\smallsetminus X$ is nonsingular, and $v|_{V\smallsetminus X}$
is smooth.
\end{itemize}
Note that $V$ may be singular along $X$; in fact, the singularities of $V$
are contained in the base locus of the pencil and in the singular locus of $X$.

As proven in \S\ref{mainpro}, 
\[
\csm(\cchi_{X,V})=X\cdot \csm(\one_V)\quad,
\]
for $\cchi_{X,V}$ as in Definition~\ref{maindef}, and this class can
be viewed as the limit as $t\to 0$ of the classes $c(TV_t)\cap [V_t]$
of the general fibers.  For example, the degree $\int
\csm(\cchi_{X,V})$ equals $\int c(TV_t) \cap [V_t]=\chi(V_t)$, the
Euler characteristic of the general fiber.

The following are immediate consequences of the definition.
\begin{itemize}
\item Let $p\in X$ be a point at which $X$ (and hence $V$) is
nonsingular.  Then $\cchi_{X,V}(p)=1$.
\item More generally, let $p$ be a point such that there exists a
neighborhood $U$ of $p$ in $V$ such that $U$ is nonsingular, and
$U\cap X$ is a normal crossing divisor in $U\cap V$.  Then
$\cchi_{X,V}(p)=0$ if $p$ is in the intersection of two or more
components of $X$, and $\cchi_{X,V}(p)=m$ if $p$ is in a single
component of $X$, of multiplicity $m$.
\end{itemize}

Since $\cchi_{X,V}(p)=1$ at nonsingular points of $X$,
$\one_X-\cchi_{X,V}$ is supported on the singular locus of $X$. This
function has a compelling interpretation, see (*) in the introduction
and~\S\ref{wmather}. The following example illustrates a typical
situation.

\begin{example}\label{milnorno}
Let $X$ be a reduced hypersurface with isolated singularities in a
nonsingular variety $V'$, and assume a general element $V_t$ of the
linear system of $X$ is nonsingular, and avoids the singularities of
$X$. Then $(-1)^{\dim V} (1-\cchi_{X,V}(p))$ equals the Milnor number
$\mu_X(p)$ of $X$ at $p$.

Indeed, as the matter is local, after a resolution we may assume that
$X$ is complete and $p$ is its only singularity. Consider the pencil
through $X$ and a general element $V_t$ of its linear system.  Then by
linearity of $\csm$, and since $\one_X-\cchi_{X,V}$ is $0$ away from
$p$,
\begin{multline*}
(-1)^{\dim V} (1-\cchi_{X,V}(p))
=(-1)^{\dim V} \int \csm(\one_X(p)-\cchi_{X,V}(p))\\
=(-1)^{\dim V} \int \csm(\one_X - \cchi_{X,V})
=(-1)^{\dim V}(\int \csm(\one_X)-\int \csm(\cchi_{X,V}))\quad.
\end{multline*}
Now $\int \csm(\one_X)=\chi(X)$ (as recalled in \S\ref{basicca}), and 
$\int \csm(\cchi_{X,V})=\int \csm(V_t)=\chi(V_t)$ as observed above.
Thus,
\[
(-1)^{\dim V} (1-\cchi_{X,V}(p))
=(-1)^{\dim V} (\chi(X)-\chi(V_t))\quad,
\]
and this is well-known to equal the Milnor number of $X$ at $p$
(see e.g., \cite{MR949831}, Corollary~1.7).\smallskip

The above formula is a particular case of the formula 
$\mu=(-1)^{\dim V}(\one_X-\cchi_{X,V})$ mentioned in the introduction,
and discussed further in \S\ref{wmather}.
\end{example}

Near points of $X$ away from the base locus, $V$ is trivially
isomorphic to $V'$, hence it is itself nonsingular; the specialization
function is then computed directly by an embedded resolution of
$X$. Along the base locus, $V$ itself may be singular, and it will
usually be necessary to resolve $V$ first in order to apply
Definition~\ref{maindef} (or Proposition~\ref{birinv}).

In the following subsections we illustrate this process in a few
simple examples, for pencils of curves.

\subsection{Singular points on curves}\label{sincurves}
Let $X$ be a curve with an ordinary multiple point $p$ of multiplicity
$m$, and assume that this point is not in the base locus of the
pencil. As pointed out above, $\cchi_{X,V}(p)$ may be computed by
considering the embedded resolution of $X$ over $p$.  This may be
schematically represented as:
\begin{center}
\includegraphics[scale=.5]{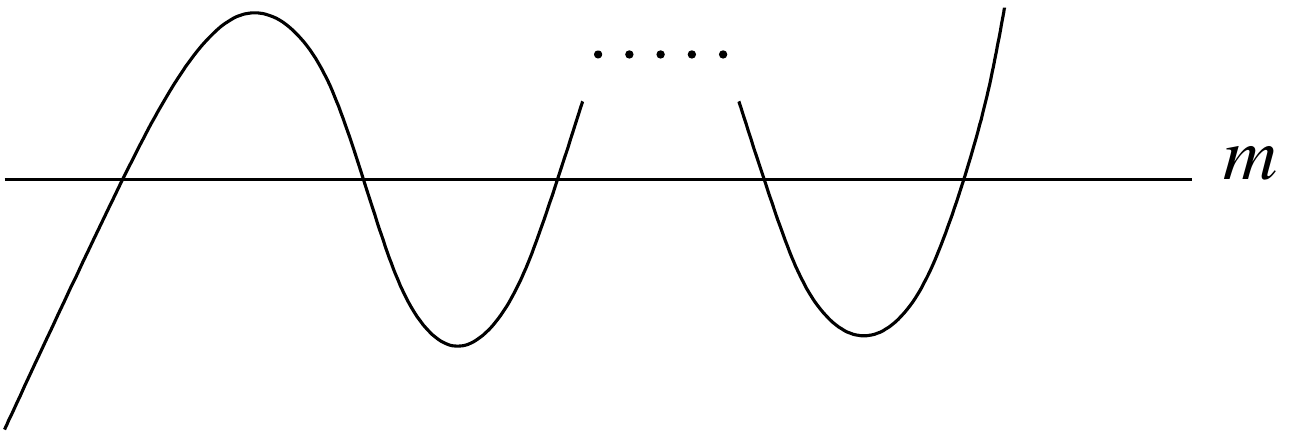}
\end{center}
We have one exceptional divisor, a $\Pbb^1$, meeting the proper
transform of the curve $m$ times and appearing with multiplicity
$m$. As the Euler characteristic of the complement of the $m$ points
of intersection in the exceptional divisor is $2-m$, we have that
$\cchi_{X,V}(p)=m(2-m)$.

More generally, let $X$ be a plane curve with an isolated singular
point $p$, again assumed to be away from the base locus of the pencil.
If the embedded resolution of $X$ has exceptional divisors $D_i$ over
$p$, $D_i$ appears with multiplicity $m_i$, and meets the rest of the
full transform of $X$ at $r_i$ points, then
\[
\cchi_{X,V}(p)=\sum_i m_i(2-r_i)\quad:
\]
indeed, each $D_i$ is a copy of $\Pbb^1$, and the Euler characteristic
of the complement of $r_i$ points in $\Pbb^1$ is $2-r_i$.  It follows
(cf.~Proposition~\ref{milnorno}) that the Milnor number of $p$ is
$1-\sum_i m_i(2-r_i)$, yielding a quick proof of this well-known
formula (see \cite{MR88a:14001}, \S8.5, Lemma~3, for a discussion of
the geometry underlying this formula over $\Cbb$).

For example, the resolution graph of an ordinary cusp is:
\begin{center}
\includegraphics[scale=.5]{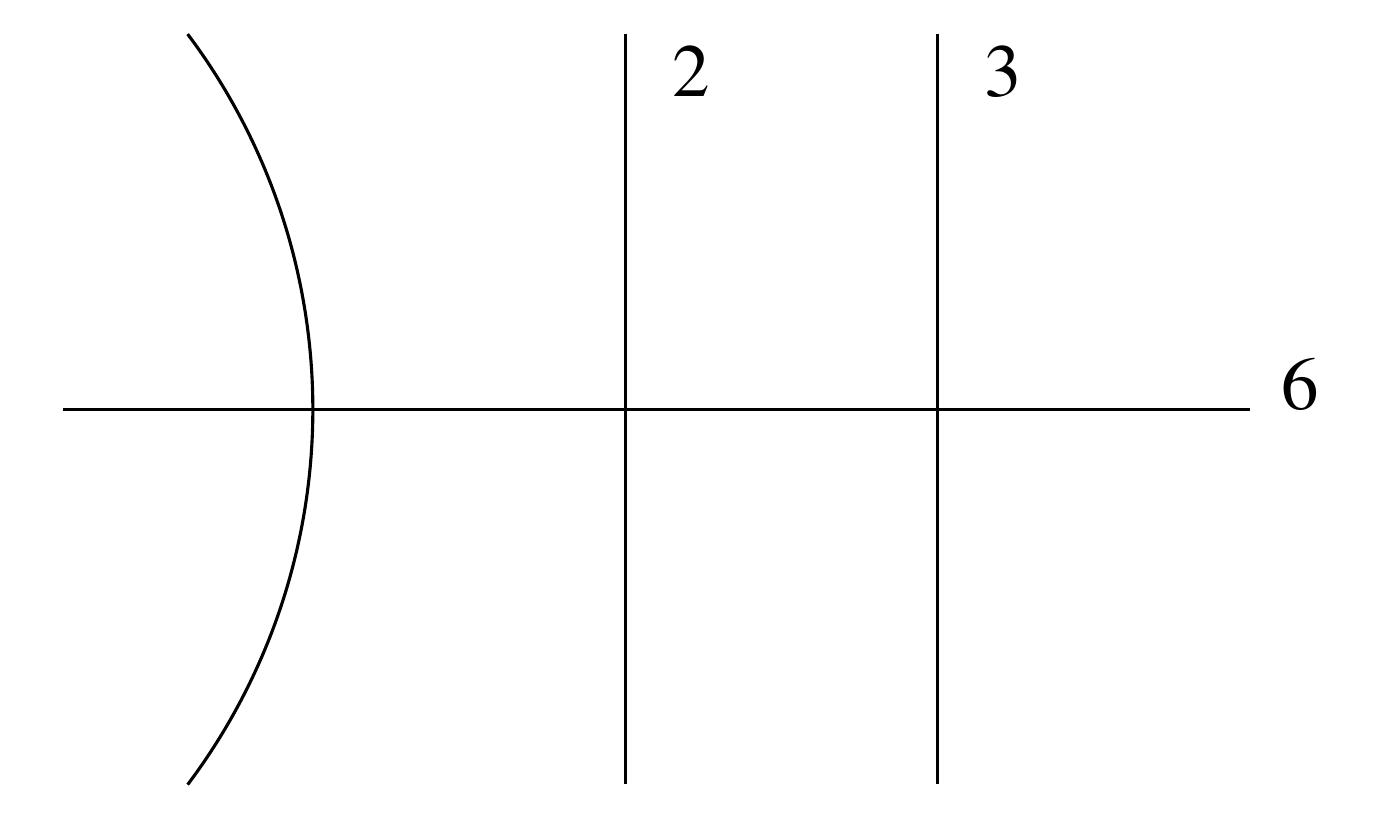}
\end{center}
where the numbers indicate the multiplicity of the exceptional
divisors.  It follows that
$\cchi_{X,V}(p)=2\cdot(2-1)+3\cdot(2-1)+6\cdot(2-3)=-1$ at an ordinary
cusp.

\subsection{Cuspidal curve, cusp in the base locus}\label{cuspta}
Typically, $V$ is {\em singular\/} at base points of the system at
which $X$ is singular. Subtleties in the computation of the
specialization function arise precisely because of this phenomenon.

Again consider a pencil centered at an ordinary cusp $p$, but such
that the general element of the pencil is nonsingular at $p$ and
tangent to $X$ at $p$:
\begin{center}
\includegraphics[scale=.5]{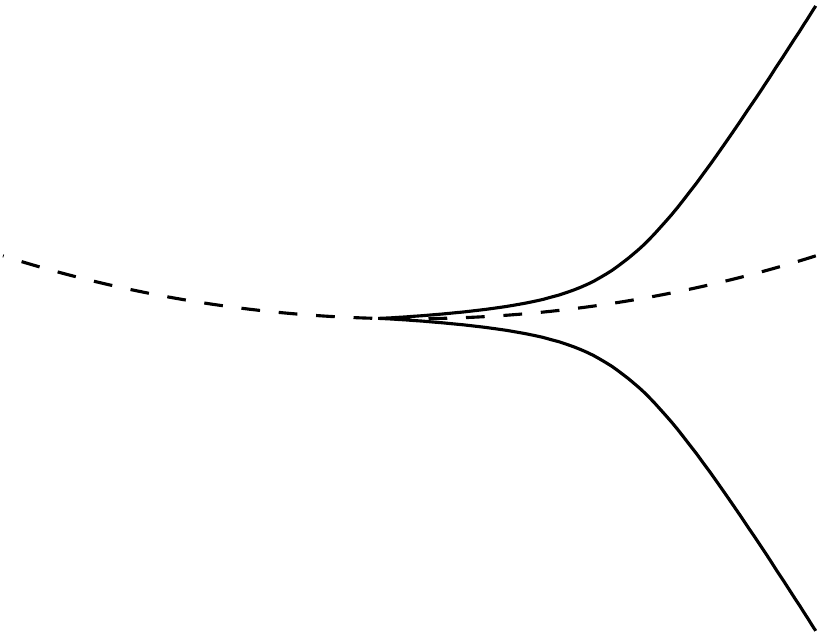}
\end{center}
A local description for the correspondence $V$ near $p$ is
\[
(y^2-x^3)-t y=0\quad.
\]
This may be viewed as a singular hypersurface in $\Abb^3_{(x,y,t)}$
and is resolved by a single blow-up at $(0,0,0)$ (as the reader may
check).  One more blow-up produces a divisor with normal crossings as
needed in Definition~\ref{maindef}, with the same resolution graph as
in \S\ref{sincurves}:
\begin{center}
\includegraphics[scale=.5]{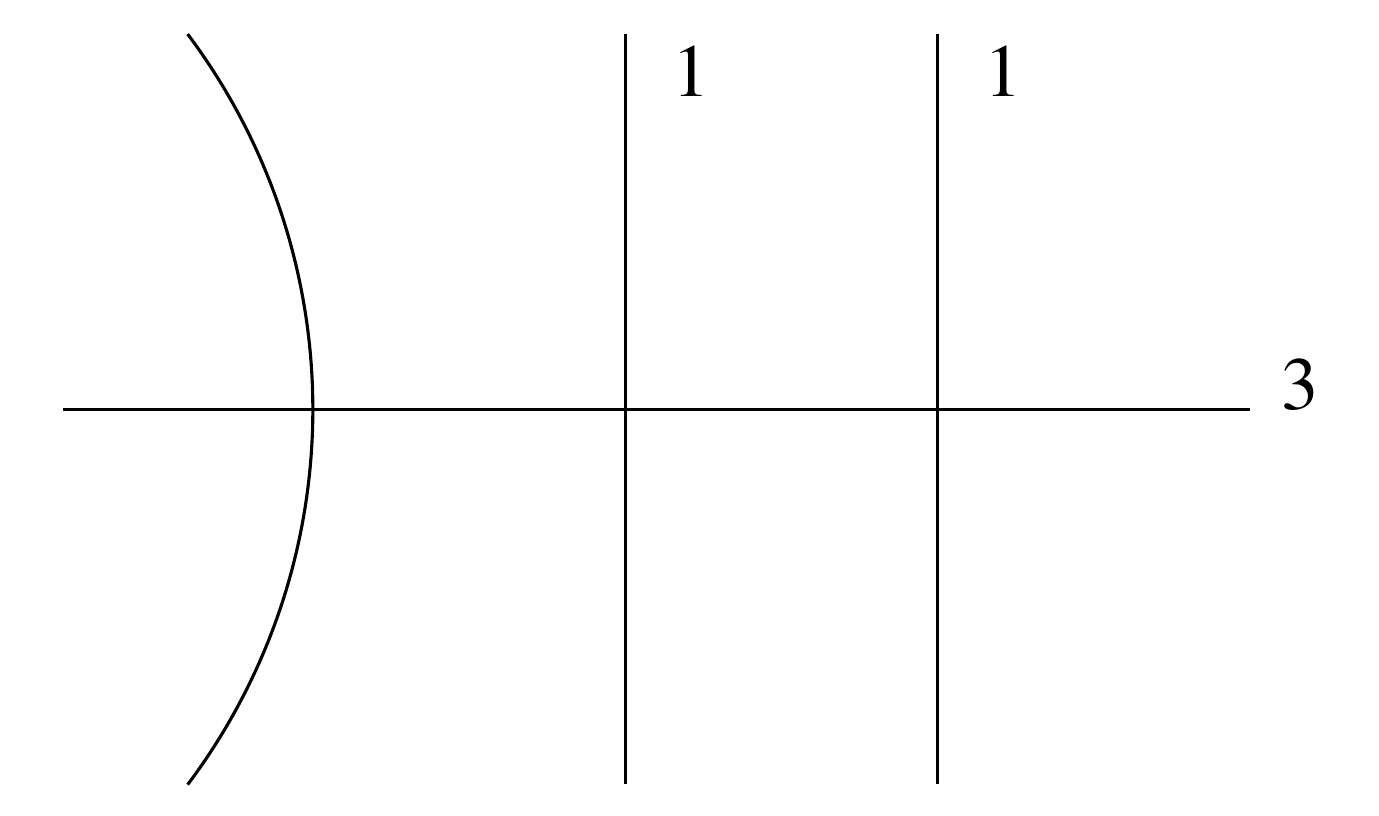}
\end{center}
but with different multiplicities, as indicated. This does not affect the 
value of $\cchi_{X,V}(p)$:
\[
\cchi_{X,V}(p)=1\cdot (2-1)+1\cdot (2-1)+3\cdot (2-3)=-1\quad.
\]

In fact, it is clear `for specialization reasons' that the value of
$\cchi_{X,V}(p)$ at an isolated singularity $p$ of $X$ is the same
whether $p$ is in the base locus of the pencil or not, since (as
pointed out at the beginning of this section) the Euler characteristic
of $X$ weighted according to $\cchi_{X,V}$ must equal the Euler
characteristic of the general element of the pencil, and this latter
is unaffected by the intersection of $X$ with the general element.  It
is however interesting that the geometry of the resolution {\em is\/}
affected by the base locus of the pencil: the total space of the
specialization is smooth near $p$ if the cusp is not in the base locus
(as in \S\ref{sincurves}), while it is singular if the cusp is in the
base locus (as in this subsection).  Any difference in the normal
crossing resolution due to these features must compensate and produce
the same value for~$\cchi_{X,V}(p)$.  The next example will illustrate
that this is not necessarily the case for non-isolated singularities.

Note also that the `distinguished decomposition' of $\cchi_{X,V}$ (or
of its motivic counterparts) do tell these two situations apart: for
example, with notation as in Remark~\ref{distdecfun},
$\cchi_{X,V}^{\epsilon_3}=\one_p$ for the cuspidal curve in
\S\ref{sincurves} (cusp $p$ away from the base locus), while
$\cchi_{X,V}^{\epsilon_3}=-\one_p$ for the cuspidal curve in this
subsection (cusp on the base locus).

\subsection{Non-isolated singularities}
Let $V'$ be a nonsingular surface, and $X\subset V'$ be a (possibly
multiple, reducible) curve. Consider the pencil between $X$ and a
nonsingular curve~$Y$ meeting a component of multiplicity $m\ge 1$ in
$X$ transversally at a general point~$p$. View this as a
specialization, as explained in~\S\ref{spesu}.  Then $\cchi_{X,V}(p)=1$,
regardless of the multiplicity~$m$.

To verify this, we may choose analytic coordinates $(x,y)$ so that $p$
is the origin, $X$ is given by $x^{n+1}$ for $n=m-1$, and $Y$ is
$y=0$; the correspondence $V$ is given by
\[
x^{n+1}-yt=0
\]
in coordinates $(x,y,t)$.

If $n=0$, then both $X$ and $V$ are nonsingular, and $\cchi_X(p)=1$ as
seen above.  If $n>0$, $V$ has an $A_n$ singularity at the origin, and
its resolution is classically well-known: the exceptional divisors are
$\Pbb^1$s, linked according to the $A_n$ diagram,
\[
\circ - \circ - \cdots - \circ \quad.
\]
The only work needed here is to keep track of the multiplicities of
the components in the inverse image of $t=0$. The reader may check
that the pull-back of $t=0$ in the resolution consists of the proper
transform of the original central fiber, with multiplicity $n+1$, and
of a chain of smooth rational curves, with decreasing multiplicities:
\begin{center}
\includegraphics[scale=.5]{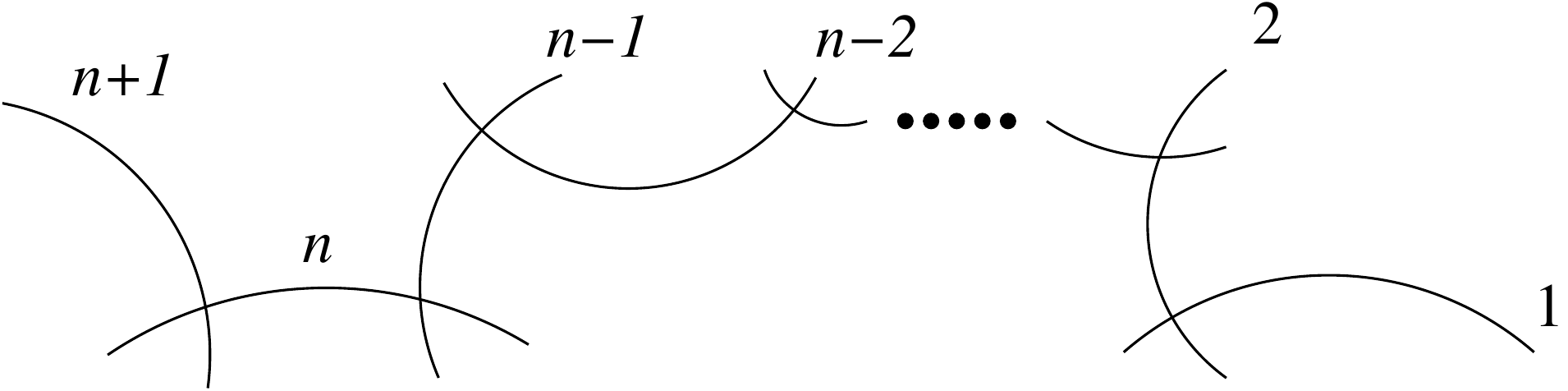}
\end{center}
The contribution to $\cchi_{X,V}(p)$ of all but the right-most
component in this diagram is $0$, because the Euler characteristic of
the complement of $2$ points in $\Pbb^1$ is $0$. The right-most
component contributes~$1$, since it appears with multiplicity $1$ and
it contributes by the Euler characteristic of the complement of one
point in $\Pbb^1$, that is $1$. Thus $\cchi_{X,V}(p)=1$, as claimed.

Note that $\cchi_{X,V}=m$ at a general point of a component of
multiplicity $m$ in $X$.  Thus, the effect of $p$ being in the base
locus is to erase the multiplicity information, provided that the
general element $Y$ of the pencil meets $X$ transversally at $p$.

This is in fact an instance of a general result, proven over $\Cbb$ in
all dimensions and for arbitrarily singular $X$ by A.~Parusi\'nski and
P.~Pragacz in Proposition~5.1 of~\cite{MR2002g:14005}. The proof given
in this reference uses rather delicate geometric arguments (for
example, it relies on the fact that a Whitney stratification satisfies
Thom's $a_f$ condition, \cite{MR1242881}).  It would be worthwhile
constructing a direct argument from the definition for $\cchi_{X,V}$
given in this paper, and valid over any algebraically closed field of
characteristic~$0$.


\section{Weighted Chern-Mather classes, and a resolution formula}\label{wmather}

\subsection{}
Let $V$ be a nonsingular variety, and let $Y\subset V$ be a closed
subscheme.  In \cite{MR1819626} we have considered the {\em weighted
Chern-Mather class\/} of $Y$,
\[
\cwm(Y):=\sum_i (-1)^{\dim Y-\dim Y_i} m_i j_{i*} \cma(Y_i)\quad,
\]
where $Y_i$ are the supports of the components of the normal cone of 
$Y$ in $V$, $m_i$ is the multiplicity of the component over $Y_i$, and 
$\cma$ denotes the ordinary Chern-Mather class. 
Also, $\dim Y$ is the largest dimension of a component of $Y$. 
Up to a sign, $\cwm$ is the same as the {\em Aluffi class\/} 
of~\cite{MR2600874}.
With the definition given above, if 
$Y$ is irreducible and reduced,
then $\cwm(Y)$ equals $\cma(Y)$; in particular, $\cwm(Y)=c(TY)\cap
[Y]$ if $Y$ is nonsingular. (However, with this 
choice of sign the contribution of a component $Y_i$ depends on the
dimension of the largest component of~$Y$.)

Consider the case in which $Y$ is the {\em singularity subscheme\/}
of a hypersurface $X$ in $V$: if $f$ is a local generator for the ideal of $X$, then 
the ideal of $Y$ in $V$ is locally generated by $f$ and the partials of $f$.

\begin{prop}\label{wmaprop}
Let $X$ be a hypersurface in a nonsingular variety $V$, and let $Y$ be its
singularity subscheme. Then
\begin{equation*}
\tag{*}
(-1)^{\dim Y} \cwm(Y)=(-1)^{\dim V} \csm(\one_{X} - \cchi_{X,V})
\end{equation*}
in $A_*X$.
\end{prop}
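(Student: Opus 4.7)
My plan is to reduce the stated identity to a short intersection-theoretic manipulation involving Fulton's Chern class $\cfu(X)$, and then to invoke the main identity of \cite{MR1819626} relating the Milnor class of a hypersurface to the weighted Chern-Mather class of its singularity subscheme.

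First, since $X$ is a Cartier divisor in the nonsingular variety $V$, Lemma~\ref{spediv} combined with Theorem~\ref{spethm} gives
$$\csm(\cchi_{X,V}) = i^{*}\csm(\one_{V\smallsetminus X})$$
in $A_{*}X$. Decomposing $\one_{V\smallsetminus X} = \one_V - i_{*}\one_X$, using $\csm(\one_V) = c(TV)\cap [V]$ (as $V$ is nonsingular), and applying the self-intersection formula $i^{*}i_{*}\alpha = c_1(N_XV)\cap\alpha$, I would obtain
$$\csm(\cchi_{X,V}) = c(TV|_X)\cap [X] - c_1(N_XV)\cap \csm(\one_X).$$
Subtracting from $\csm(\one_X)$ and factoring out $c(N_XV) = 1 + c_1(N_XV)$ produces the key formula
$$\csm(\one_X) - \csm(\cchi_{X,V}) = c(N_XV) \cap \bigl(\csm(\one_X) - \cfu(X)\bigr),$$
where $\cfu(X) = c(TV|_X)\,c(N_XV)^{-1}\cap [X]$ is Fulton's Chern class of the hypersurface~$X$. (A quick sanity check: for $X$ nonsingular both sides vanish, as $\csm(\one_X) = c(TX)\cap [X] = \cfu(X)$ by adjunction.)

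The second step is to invoke the central identity of \cite{MR1819626}: the class $\csm(\one_X) - \cfu(X)$ is, up to sign, the Milnor class of $X$, is concentrated on $Y$, and its twist by $c(N_XV)$ is a signed multiple of the weighted Chern-Mather class of $Y$. Quantitatively,
$$c(N_XV)\cap \bigl(\csm(\one_X) - \cfu(X)\bigr) = (-1)^{\dim V + \dim Y}\,\cwm(Y)$$
in $A_{*}X$. Combined with the previous display and multiplication by $(-1)^{\dim V}$, this yields $(-1)^{\dim V}\csm(\one_X - \cchi_{X,V}) = (-1)^{\dim Y}\cwm(Y)$, which is the claim.

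The main obstacle will be invoking and correctly normalizing the identity from \cite{MR1819626}: that paper phrases the relation in terms of Aluffi's $\mu$-class of~$Y$ (a Segre-type construction) rather than $\cwm(Y)$ directly, so some translation between the two formulations and careful sign-bookkeeping driven by $\dim V$ and $\dim Y$ will be required. By contrast, the intersection-theoretic step producing the factor $c(N_XV)\cap(\csm(\one_X)-\cfu(X))$ is entirely routine once Theorem~\ref{spethm} is in hand and the Cartier hypothesis is used to identify $\sigma_{X,V}$ with ordinary pull-back.
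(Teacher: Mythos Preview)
Your proposal is correct and follows essentially the same route as the paper's own proof: both compute $\csm(\one_X-\cchi_{X,V})$ by applying Theorem~\ref{spethm} and Lemma~\ref{spediv}, expand $i^*\csm(\one_{V\smallsetminus X})$ (the paper writes this as $X\cdot\csm(\one_V)-X\cdot\csm(\one_X)$, you phrase the same thing via the self-intersection formula), factor out $c(\cO(X))=c(N_XV)=1+X$ to obtain $c(\cL)\cap(\csm(\one_X)-\cfu(X))$, and then invoke the main identity of \cite{MR1819626}. Your anticipated obstacle is milder than you suggest: the paper simply cites Theorem~1.2 of \cite{MR1819626}, which already gives the relation in the form $c(\cL)\cap(\csm(X)-\cfu(X))=(-1)^{\dim V-\dim Y}\cwm(Y)$, so no intermediate translation through the $\mu$-class is needed.
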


Since $\csm(\cchi_{X,V})$ admits a natural multiplicity decomposition
(Remark~\ref{distdeccla}), so do the class $\cwm(Y)$ and its degree, a
Donaldson-Thomas type invariant (\cite{MR2600874}, Proposition~4.16). 
I.e., monodromy induces a decomposition of these invariants.\smallskip

\begin{remark}
The relation in Proposition~\ref{wmaprop} is a $\csm$-counterpart of the 
identity $\mu=(-1)^{\dim V}
(\one_X-\cchi_{X,V})$ mentioned in the introduction, and is equivalent
to Theorem~1.5 in \cite{MR1819626}. The identity amounts to the
relation of~$\mu$ with the Euler characteristic of the perverse sheaf
of vanishing cycles. It goes at least as far back as \cite{MR0371889},
Theorem~4; it also follows from Proposition~5.1
in~\cite{MR2002g:14005}, and is equation (4)
in~\cite{MR2600874}. However, these references work over $\Cbb$, and
it seems appropriate to indicate a proof of (*) which is closer in
spirit to the content of this paper.
\end{remark}

\begin{proof}
By Theorem~\ref{spethm},
\[
\csm(\cchi_{X,V})=X\cdot ( \csm(\one_{V\smallsetminus X}))
\]
(cf.~Lemma~\ref{spediv}). Thus
\begin{align*}
\csm(\one_X - \cchi_{X,V})
& =\csm(\one_X) - X\cdot \csm(\one_V) + X\cdot \csm(\one_X)\\
& =(1+X)\cdot \csm(\one_X)-c(TV|_X)\cap [X] \\
& =c(\cL) \cap \left(\csm(\one_X)-\frac{c(TV|_X)}{1+X}\cap [X]\right)
\end{align*}
with $\cL=\cO(X)$. The class $\frac{c(TV|_X)}{1+X}\cap [X]$ is the
class of the virtual tangent bundle to $X$, denoted $\cfu(X)$ in
\cite{MR1819626}.  Thus,
\[
\csm(\one_X - \cchi_{X,V})=c(\cL) \cap \left(\csm(X)-\cfu(X)\right)\quad.
\]
By Theorem~1.2 in \cite{MR1819626}, it follows that
\[
\csm(\one_X-\cchi_{X,V})=(-1)^{\dim V-\dim Y} \cwm(Y)\quad,
\]
which is the statement.
\end{proof}

Both sides of the formula in Proposition~\ref{wmaprop}
make sense in a more general situation than the case in which $Y$ is the 
singularity subscheme of a hypersurface $X$: the class $\cwm(Y)$ is defined
for arbitrary subschemes of a nonsingular variety, and Definition~\ref{maindef}
also gives a meaning to $\cchi_{X,V}$ for any subscheme $X$ of a nonsingular
variety. 
Otherwise put, both sides of the identity $\mu = (-1)^{\dim V} (\one_X -
\cchi_{X,V})$ admit compelling generalizations: Definition~\ref{maindef}
does not require $X$ to be a hypersurface, and $\cwm(Y)$ is the class 
corresponding to a constructible function $\nu_Y$ defined for all $Y$
as a specific combination of local Euler obstructions (see Proposition~1.5 in
\cite{MR1819626}). This function has garnered some interest in 
Donaldson-Thomas theory, and is currently commonly known as 
{\em Behrend's function.\/} It is tempting to guess that a statement 
closely related to Proposition~\ref{wmaprop} may still hold for 
{\em any\/} $Y$ and a suitable choice of $X\subset V$ with singularities
along $Y$. (Of course $\one_X$ should be
replaced by $(\codim_XV) \one_X$ for the right-hand-side to be
supported on the singularities of $X$, see Example~\ref{sci}.)

\subsection{}
We end with an expression for the function~$\mu$ for the singularity
subscheme $Y$ of a hypersurface $X$ in terms of a resolution of
$X$. For this we need to work with $\Qbb$-valued constructible
functions; we do not know if a similar statement can be given over
$\Zbb$.

Assume $V$ is nonsingular, $X\subset V$ is a hypersurface, and
$Y\subseteq X$ is the singularity subscheme. Consider a morphism $w: W
\to V$ as in Definition~\ref{maindef}. Thus, $w^{-1}(X)$ is a divisor
$D$ with normal crossings and nonsingular components $D_\ell$,
$\ell\in L$; $m_\ell$ denotes the multiplicity of $D_\ell$ in $D$, and
$d: D\to X$ is the restriction of $w$.

The relative canonical divisor of $w$ is a combination $\sum_{\ell}
\mu_\ell D_\ell$ of components of $D$.

For $K\subseteq L$, we let
\[
D_K^\circ=(\cap_{k\in K} D_k)\smallsetminus (\cup_{\ell\not\in K}
D_\ell)\quad.
\]

\begin{prop}\label{Behrendexp}
With notation as above, $\mu$ is given by
\[
(-1)^{\dim X} d_*\left(\sum_{\ell\in L}\left(m_\ell-\frac
1{1+\mu_\ell}\right) \one_{D_\ell^\circ} -\sum_{K\subseteq L, |K|\ge
2} \frac 1{\prod_{k\in K} (1+\mu_k)} \one_{D_K^\circ}\right)\quad.
\]
\end{prop}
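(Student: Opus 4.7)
The plan is to reformulate the statement as a pointwise identity on fibers of $w$, and to prove this identity using the weak-factorization strategy already employed in Lemma~\ref{spepro}.

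\textbf{Reformulation.} Combining $\mu=(-1)^{\dim V}(\one_X-\cchi_{X,V})$ from~(*) in the introduction, the definition $\cchi_{X,V}=d_*\bigl(\sum_\ell m_\ell\,\one_{D_\ell^\circ}\bigr)$, and the identity $(-1)^{\dim X}=-(-1)^{\dim V}$ (since $X$ is a hypersurface in~$V$), the formula of the proposition is equivalent to the assertion
\[
\one_X \;=\; d_*\!\left(\sum_{\emptyset\ne K\subseteq L}\frac{1}{\prod_{k\in K}(1+\mu_k)}\,\one_{D_K^\circ}\right).
\]
Pointwise at $q\in X$, with $F_q=w^{-1}(q)$, this becomes
\[
\sum_{\emptyset\ne K\subseteq L}\frac{\chi\bigl(D_K^\circ\cap F_q\bigr)}{\prod_{k\in K}(1+\mu_k)} \;=\; 1. \qquad(\star)
\]

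\textbf{Strategy for $(\star)$.} I would show that the left-hand side is independent of the choice of log resolution $w$ by verifying its invariance under blow-ups of smooth centers $Z\subseteq D$ meeting $D$ with normal crossings, in the spirit of Lemma~\ref{spepro}; combined with a base case, this gives $(\star)$ for every log resolution. For the base case: when $X$ is already a divisor with normal crossings and nonsingular components in the smooth $V$, the choice $w=\operatorname{id}_V$ is admissible, $F_q=\{q\}$, all $\mu_\ell$ vanish, and $(\star)$ is tautological. For general $X$, one chains this case, through the blow-ups of an embedded resolution of $X\subset V$, to an arbitrary log resolution by the weak factorization theorem of~\cite{MR1896232}.

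\textbf{Blow-up invariance.} Under a blow-up $\pi:\widetilde W\to W$ along a smooth $Z\subseteq D$ meeting $D$ normally, the proper transforms $\widetilde D_\ell$ retain $(m_\ell,\mu_\ell)$, while the new exceptional $E$ acquires
\[
m_E=\sum_{D_k\supset Z}m_k,\qquad \mu_E=(\codim_Z W-1)+\sum_{D_k\supset Z}\mu_k,
\]
the first as in Lemma~\ref{spepro}, and the second derived from $K_{\widetilde W/W}=(\codim_Z W-1)E$ together with $\pi^*D_k=\widetilde D_k+E$ for $D_k\supset Z$. Exactly as in that lemma, the invariance reduces to a local identity at each $z\in Z$, expressing a weighted Euler characteristic of $\pi^{-1}(z)\cong\Pbb^r$ (with $r=\codim_Z W-1$), stratified by the hyperplanes $\widetilde D_k\cap\pi^{-1}(z)$ coming from the $D_k\supset Z$, in terms of the pre-blow-up contribution at $z$.

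\textbf{Main obstacle.} The technical heart of the proof is the verification of this weighted local identity. Whereas in Lemma~\ref{spepro} the relevant unweighted Euler characteristics vanished by the $\chi(\Tbb^\bullet)=0$ computation of Lemma~\ref{elemlem}, here the weights $1/(1+\mu_k)$ and $1/(1+\mu_E)$ must recombine — via the explicit formula for $\mu_E$ above — into the single factor $1/\prod_{D_k\ni z}(1+\mu_k)$ required at $z$. I anticipate that this recombination reduces, after applying Lemma~\ref{elemlem} to write each stratum of $\Pbb^r$ as an $\Abb^{r+1-|I|}\times\Tbb^{|I|-1}$ and hence contributing a controlled Euler characteristic, to a standard partial-fraction telescoping whose verification constitutes the main combinatorial content of the proof. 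As an alternative, one may recognize $(\star)$ as the Euler-characteristic specialization of Kontsevich's change-of-variables formula in motivic integration applied to the class $[X]$, which bypasses the weak-factorization combinatorics but draws on external machinery.
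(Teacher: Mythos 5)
Your reformulation is exactly the paper's: the proposition reduces to the identity
\[
\one_X \;=\; d_*\!\left(\sum_{K\subseteq L,\ |K|\ge 1}\frac{1}{\prod_{k\in K}(1+\mu_k)}\,\one_{D_K^\circ}\right),
\]
but the paper then disposes of this identity by citation --- it is the Euler--characteristic shadow of the motivic change--of--variables formula (\S4.4.3 of \cite{MR1905328}, via \cite{MR2098642}), i.e.\ precisely the ``alternative'' you relegate to your last sentence. Your primary route, proving $(\star)$ from scratch by weak factorization, is therefore a genuinely different and more self-contained argument, in the spirit of Lemma~\ref{spepro}. The local computation you defer does close up: at $z\in Z$, writing $A=\{k: D_k\supset Z\}$, $B=\{k: z\in D_k,\ D_k\not\supset Z\}$, $e=|A|$, the strata of $F\cong\Pbb^r$ cut out by the $e$ hyperplanes $\Til D_k\cap F$ contribute $\chi=r+1-e$, $1$, or $0$ according to whether they avoid all, exactly one, or at least two of the hyperplanes (Lemma~\ref{elemlem}, extended to the case of no hyperplanes), every such stratum also lying on $E$ and on all $\Til D_k$ with $k\in B$; the required equality collapses to $(r+1-e)+\sum_{j\in A}(1+\mu_j)=1+\mu_E$, which is your formula $\mu_E=\codim_ZW-1+\sum_{D_k\supset Z}\mu_k$. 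So the ``main obstacle'' is a two-line verification, not a deep combinatorial one, and you should simply carry it out.

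The genuine gap is in the base case. Blow-up invariance plus weak factorization shows that the right-hand side of $(\star)$ is the same for \emph{all} log resolutions of $X\subset V$; it does not determine its value unless you exhibit one log resolution for which $(\star)$ is known. Your base case $w=\mathrm{id}_V$ exists only when $X$ is already a normal crossings divisor, and ``chaining through the blow-ups of an embedded resolution'' does not bridge the remaining distance: at the intermediate stages of an embedded resolution the total transform of $X$ is not yet a normal crossings divisor, so the expression in $(\star)$ is undefined there and your invariance lemma (whose hypotheses require $D$ normal crossings and $Z$ meeting it normally) cannot be applied to those blow-ups; likewise the refined weak factorization theorem does not connect $W$ to $V$ itself, since its normal-crossings conclusions require the complement of the isomorphism locus to be a normal crossings divisor at \emph{both} ends. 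The fix is to observe that $(\star)$ does not involve $X$ at all, only the discrepancies of $w$: since refining a stratification by a divisor of weight $\frac1{1+0}=1$ does not change the weighted sum (additivity of $\chi$), $(\star)$ is the restriction over $X$ of the identity $w_*\bigl(\sum_I \prod_{i\in I}(1+\mu_i)^{-1}\one_{E_I^\circ}\bigr)=\one_V$, where the $E_i$ are the exceptional divisors only and $I=\emptyset$ contributes the open stratum. \emph{That} statement does admit your induction, because the centers of a Hironaka resolution have normal crossings with the accumulated exceptional divisor at every stage, so the invariance step applies repeatedly starting from $\mathrm{id}_V$. Without this decoupling from $X$ (or the motivic-integration input the paper actually invokes), the argument as written does not reach general $X$.
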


\begin{proof}
As discussed above, $\mu=(-1)^{\dim
X} (\cchi_{X,V}-\one_X)$. The given formula follows immediately from
\[
\cchi_{X,V}=d_* \sum_{\ell\in L} m_\ell \one_{D_\ell^\circ}\quad,
\]
which is a restatement of Definition~\ref{maindef}, and
\[
\one_X= d_* \left(\sum_{K\subseteq L, |K|\ge 1} \frac 1{\prod_{k\in K}
(1+\mu_k)} \one_{D_K^\circ}\right)\quad,
\]
which follows from a small generalization of \S4.4.3 in
\cite{MR1905328} (cf.~Theorem~2.1 and the proof of Theorem~3.1 in
\cite{MR2098642}).
\end{proof}

Again, it is tempting to guess that a similar expression may exist for 
Behrend's function of an arbitrary subscheme $Y$ of a nonsingular variety~$V$, 
for a suitable choice of a  corresponding pair $X\subset V$.



\end{document}